\theoremstyle{plain}
\newtheorem{theorem}{Theorem}[section] 
\newtheorem{lem}[theorem]{Lemma}
\newtheorem{prop}[theorem]{Proposition}
\newtheorem{cor}[theorem]{Corollary}
\theoremstyle{remark}
\newtheorem{rem}[theorem]{Remark}
\theoremstyle{definition}
\newcommand{\A}{\mathbb{A}}
\newcommand{\Ax}{\mathbb{A}^{\times}}
\newcommand{\Af}{\mathbb{A}_{\textrm{f}}}
\newcommand{\R}{\mathbb{R}}
\newcommand{\C}{\mathbb{C}}
\newcommand{\Cx}{\mathbb{C}^{\times}}
\newcommand{\Hb}{\mathbb{H}}
\newcommand{\Nat}{\mathbb{N}}			
\newcommand{\Z}{\mathbb{Z}}
\newcommand{\Zp}{\mathbb{Z}_{p}}
\newcommand{\Zpx}{\mathbb{Z}_{p}^{\times}}
\newcommand{\Q}{\mathbb{Q}}
\newcommand{\Qx}{\mathbb{Q}^{\times}}
\newcommand{\Qp}{\mathbb{Q}_{p}}
\newcommand{\Pb}{\mathbb{P}}
\newcommand{\Cc}{\mathcal{C}}
\newcommand{\Wc}{\mathcal{W}}
\newcommand{\Wh}{\mathcal{W}}
\newcommand{\af}{\mathfrak{a}}
\newcommand{\GL}{\operatorname{GL}}
\newcommand{\SL}{\operatorname{SL}}
\newcommand{\Orth}{\operatorname{O}}
\newcommand{\n}{\operatorname{n}}
\newcommand{\N}{\operatorname{N}}
\newcommand{\Hom}{\operatorname{Hom}}
\newcommand{\sgn}{\operatorname{sgn}}
\newcommand{\isom}{\cong}
\newcommand{\pr}{\operatorname{pr}}
\newcommand{\Lie}{\operatorname{Lie}}
\newcommand{\f}{\operatorname{f}}
\newcommand{\Ch}{\operatorname{Ch}}
\renewcommand{\mod}{\operatorname{mod}}
\newcommand{\bs}{\backslash}
\newcommand{\Mod}[1]{\ (\operatorname{mod}\ #1)}
\newcommand{\smat}[4]{\left(\begin{smallmatrix}
#1 & #2 \\ #3 & #4
\end{smallmatrix}\right)}
\newcommand{\abs}[1]{\lvert{#1}\rvert}
\newcommand{\emb}{\hookrightarrow}
\newcommand{\emblong}{\ensuremath{\lhook\joinrel\relbar\joinrel\rightarrow}}
\newcommand{\arrup}[2]{\mathrel{\mathop{#1}^{#2}}}
\newcommand{\itemspacing}{
\setlength\topsep{0.1in}
\setlength\itemsep{0.1in}}
\newcommand{\ds}{\displaystyle} 
\newcommand{\tp}[2]{\texorpdfstring{#1}{#2}}
\title{Vorono\"{i} summation via switching cusps}
\date{3$^{\mathrm{rd}}$ April 2019}
\author{Edgar Assing}
\author{Andrew Corbett}
\address{School of Mathematics, University of Bristol, Bristol, UK, BS8 1TW}
\email{edgar.assing@bristol.ac.uk}
\address{Harrison Building, Streatham Campus, North Park Road, Exeter, UK, EX4 4QF}
\email{A.J.Corbett@exeter.ac.uk}
\begin{document}

\begin{abstract}

We consider the Fourier expansion of a Hecke (resp.\ Hecke--Maa\ss) cusp form of general level $N$ at the various cusps of $\Gamma_{0}(N)\bs\Hb$. We explain how to compute these coefficients via the local theory of $p$-adic Whittaker functions and establish a classical Vorono\"i summation formula allowing an arbitrary additive twist. Our discussion has applications to bounding sums of Fourier coefficients and understanding the (generalised) Atkin--Lehner relations.
\end{abstract}

\maketitle


\section{Introduction}

The purpose of this note is to prove two formulae. The first, a Vorono\"i-type summation formula, relating additively twisted Fourier coefficients of a cuspidal Hecke eigenform $f$ to a dual sum of coefficients at another cusp, related to the original twist. Secondly, we give an explicit formula for the Fourier coefficients $f$ at an arbitrary cusp. As applications, we prove upper bounds for sums of Fourier coefficients and we revise the classical Atkin--Lehner relations in a $p$-adic setting.

Whilst similar formulae are, in certain settings, well known, we provide a perspective here that links the two fundamentally and lays bare the mechanics behind additive twists and switching cusps. That perspective is derived from noting that classical Fourier coefficients are special values of $p$-adic Whittaker functions on $\GL_{2}(\Qp)$. In that vein, we intend the following exposition to serve pedagogically as a reference for the analysis of classical Fourier coefficients with adelic machinery.

For the remainder of this introduction we detail our main results\footnote{Our results explicitly handle the case of holomorphic and of Maa\ss\ forms simultaneously.} (Proposition \ref{prop:the_proposition} and Corollary \ref{cor:voronoi} below) for a holomorphic cusp form $f(z)=\sum_{n\geq 1}a_{f}(n)e(nz)$ of level $N\in\Nat$, weight $k\in 2\Nat$ and nebentypus $\chi$.
Fix a modulus $q\in\Nat$ for the additive character $n\mapsto e(an/q)$ where $(a,q)=1$.
Then for a Bruhat--Schwartz function $F\colon\R_{>0}\rightarrow\C$ we prove that
\begin{multline}\label{eq:intro_voronoi}
	\sum_{\n\in\Nat} e\left(\frac{an}{q}\right)a_f(n) n^{-\frac{k-1}{2}} F(n)\\
		=\, q^{-1} \sum_{n\in\Nat}e\left(-\frac{\overline{a}n}{q\delta(\af)}\right)a_f(n,\af)\bigg(\frac{n}{\delta(\af)}\bigg)^{-\frac{k-1}{2}}[\mathcal{H}_f F]\left(\frac{n }{[q^2,Mq,N]}\right)
\end{multline}
where $\mathcal{H}_f F$ is the Hankel transform of $F$ (see \eqref{eq:def_hankel_transform_hol} and \eqref{eq:def_hankel_transform_maa} below) and $a_{f}(n;\af)$ denotes the $n$-th Fourier coefficient of $f$ at the cusp $\af=a/q$ of (extended) cusp width $\delta(\af)$, as defined in \eqref{eq:extended_width}. Formula \eqref{eq:intro_voronoi} constitutes one case of Corollary \ref{cor:voronoi} given later.

Many of the technicalities in evaluating such formulae explicitly, in terms of the coefficients $a_{f}(n)$, are buried inside the coefficients $a_{f}(n;\af)$, bringing us to our second result, formally given in Proposition \ref{prop:the_proposition}.
Write
$n_1 = \prod_{p\mid N}p^{n_p}$ for some $n_p\geq 0$ and $n_0\in\Nat$ with $(n_0,N)=1$. Then at the cusp $\af=a/q$ for some $q\mid N$ we have
\begin{multline}\label{eq:intro_cusps}
	a_f(n_1 n_0;\af)=  \Omega_{\chi,q,\delta(\af)} a_f(n_0,\infty) e\left( \frac{n\overline{a}}{\delta(\af)L} \right)\left(\frac{n_1}{\delta(\af)}\right)^{\frac{k}{2}}\\ \times\, \prod_{p\mid n_1} \omega_{\chi,p}\left(\frac{n_0n_1}{(n_1,p^{\infty})}\right)W_p\left(\left(\begin{matrix} 0&p^{n_p-d_{\pi_p}(q_p)}\\-1& -u_p(n_0)p^{q_p^{-1}} \end{matrix}\right)\right)
\end{multline}
where $W_p$ is the $p$-th component of the global Whittaker function associated to $f$ (see \eqref{eq:factorisation_of_W_varphi} below), its matrix argument is given in Proposition \ref{prop:the_proposition}; $\omega_{\chi,p}$ is the $p$-th component of the Hecke-character associated to $\chi$; and $\Omega_{\chi,q,\delta(\af)}\in\C_{\abs{z}=1}$ is defined in \eqref{eq:def_of_unitary_coeffi} and in particular satisfies ${\Omega_{\chi,q,\delta(\af)}}=1$ whenever $\chi = 1$ or $q=1$. By the results of \cite{As17}, one may evaluate the local factors at $p\mid n_1$ explicitly. Furthermore, combining \eqref{eq:intro_cusps} with \eqref{eq:intro_voronoi} we thus obtain a general Vorono\"i summation formula for $\GL_{2}$ in the $N$-$q$ aspect.

\begin{rem}
Formula \eqref{eq:intro_cusps} implies the well known phenomenon that the Fourier coefficients $a_f(n;\af)$ in general are not multiplicative objects.
Moreover, due to the possible large values of $W_{p}$, derived in \cite{As17}, the coefficients $a_f(n;\af)$ can be quite large in terms of the level $N$.
\end{rem}

We are able to give some novelty results as immediate corollaries to \eqref{eq:intro_cusps}. For example, in Corollary \ref{cor:ppprrp} we compute the bound
\begin{equation}
	\sum_{n\leq X} \vert a_f(n;\af)\vert^2
	\ll \frac{k^2N^{\epsilon}}{\delta(\af)^k}\left( X^{k+\epsilon} + X^{k-\frac{1}{2}+\epsilon}(L,\frac{N}{L})^k\right).\nonumber
\end{equation}
Another application of \eqref{eq:intro_cusps} is to derive an adelic approach to the classical Atkin--Lehner relations. We will see in \S \ref{sec:atkin_lehner} that, via \eqref{eq:intro_cusps}, the Atkin--Lehner involution arises naturally through local matrix identities. The corresponding identities relating Fourier coefficients at different cusps arise from several functional equations of the local Whittaker functions $W_p$.


\begin{rem}
 Such formulae as \eqref{eq:intro_voronoi} and \eqref{eq:intro_cusps} are naturally sensitive to the inducing information that determines the local representation attached to $f$ at $p$. Hence a Vorono\"i formula is inherently non-uniform in the $N$-$q$ aspect outside the extreme cases $(N,q)=1$ and $N\mid q$, where the local representations do not interact. We remark that our formula trivially agrees with the `go-to' reference  \cite[\S A]{KMV02} in such extreme cases (see Corollaries~\ref{cor:cl_vor_1} and~\ref{cor:cl_vor_2}).
Uniformity may nonetheless be recovered should one concede to looking at families of fixed (or restricted) representation type at $p$, in which case such a general Vorono\"i formula is quickly derived from the formulae given in \cite{As17}, in conjunction with \eqref{eq:intro_voronoi} and \eqref{eq:intro_cusps}.
\end{rem}

\begin{rem}
Our approach towards Vorono\"i summation does not directly rely on the functional equation of Dirichlet series. Instead it can be explained in terms of a strong Gelfand formation. (See \cite{Reznikov} for discussion on strong Gelfand formations.) The sides of the formation are formed by the inclusions $\{1\}\subset N \subset \GL_{2}$ and $\{1\} \subset wNw^{-1} \subset \GL_2$ where $N$ is the unipotent subgroup of $\GL_{2}$. Each inclusion comes with the strong multiplicity-one property. This enables one to expand the trivial period
\begin{equation}
	[\varphi \mapsto \varphi(g)] \in \Hom_{\{1\}}(\pi, \C), \nonumber
\end{equation} 
for an automorphic representation $\pi$ of $\GL_{2}$, in terms of well chosen model periods in the intermediate $\Hom$-spaces; one wall of the Gelfand formation of course gives rise to the Whittaker model. The equality arising from equating the expansions coming from each side determines the functional equation for a general Vorono\"i-type formula in the representation theoretic setting. Classically, this can be thought of as swapping from the Fourier expansion of a cusp form at $\infty$ to the Fourier expansion at $0$. This is because the group $N$ (resp.\ $wNw^{-1}$) corresponds to the stabiliser of $\infty$ (resp.\ $0$). We make this remark precise in Theorem~\ref{th:voronoi2}.
\end{rem}




\subsection*{Common notation}

Put $e(z):=e^{2\pi i z}$ for $z\in\C$. If $g=\left(\begin{smallmatrix}
a&b\\
c&d
\end{smallmatrix}\right)\in \GL_{2}(\C)$ we write $gz=\frac{az+b}{cz+d}$ and $j(g,z)=cz+d$. If moreover $g\in\SL_{2}(\Z)$ then, by convention, a Dirichlet character $\chi$ defines a function $\chi(g):=\chi(d)$.
Let $(a,b)$ and $[a,b]$ denote the greatest common divisor and the least common multiple of $a,b\in\Nat$, respectively. Additionally, if $c\in\Nat$ let $[a,b,c]:=[a,[b,c]]=[[a,b],c]$. We write $a\mid b^{\infty}$ to denote that there exists a $k\in\Nat$ such that $a\mid b^{k}$; similarly we let $(a,b^{\infty}):=\max_{k\geq1}(a,b^{k})$. We let $\lfloor y \rfloor$ and $\lceil y \rceil$ denote the floor and ceiling of $y\in \R$.
The $p$-adic valuation of $x\in\Qp$ is denoted by $v_p(x)$. If $A$ is a logical assertion, we use Dirac's symbol $\delta(A)$ to denote a $1$ if $A$ is true and a $0$ otherwise.


\section{Classical Fourier expansions at the cusps of \tp{$\Gamma_0(N)\bs\Hb$}{the upper half-plane modulo a congruence subgroup}}\label{sec:fourier_expn}

\subsection{Classical newforms}\label{sec:classical_newforms}

The results of this work apply specifically to newforms on $\Gamma_0(N)\bs\Hb$ which vanish in the cusps. These may come from either the world of Maa\ss\ forms of holomorphic modular forms.\footnote{We essentially consider the full spectrum of $\Gamma\bs\Hb$ from a representation theoretic point of view. The weight $k$ Maa\ss\ forms reduce to the weight $0$ and $1$ cases via the weight lowering operator (see \cite[Exercise~2.1.7]{bump}).
Our omission of the weight $1$ Maa\ss\ forms is but for notational convenience. Our results may be straightforwardly amended to describe such forms.}
We introduce notation here to deal with both of these cases. From a pedagogical perspective, much of the theory we introduce in this section and the next applies more generally, without the assumptions that the form is new or indeed cuspidal \cite{bump,gelbart}.
We shall uniformly refer to a function $f\colon \Hb\rightarrow\C$ as a \textit{cuspidal newform} of weight $k\in \Z_{\geq 0}$ if $f$ falls into one of the following two categories:
\begin{itemize}
\itemspacing
\item Cuspidal holomorphic Hecke newforms, in which case $k>0$.
\item Cuspidal Hecke--Maa\ss\ newforms of weight $k=0$. In this case fix $m\in\{0,1\}$ such that $f(-\overline{z})=(-1)^{m}f(z)$ and $\lambda_f=\frac{1}{4}+t_f^2$, the (Laplace) eigenvalue of $\Delta:=-y^2(\frac{\partial^2}{\partial x^2} + \frac{\partial^2}{\partial y^2})$ for which $\Delta f= \lambda_f f$.
\end{itemize}
We additionally assume the normalisation of \eqref{eq:f_normalisation}.
We consider such a cuspidal newform $f$ as fixed throughout our exposition.
For a Dirichlet character of conductor $M$ with $M\mid N$,
we say that a newform $f$ has level $N$ and nebentypus $\chi$ if for each $\gamma\in\Gamma_{0}(N)$ the weight $k$ modular identity holds:
\begin{equation}\label{eq:modular_identity}
	f\vert_{k}\gamma=\chi(\gamma)f
\end{equation}
where we apply the usual definitions such that for, $g=\left(\begin{smallmatrix}
a&b\\
c&d
\end{smallmatrix}\right)\in\GL_{2}(\R)$ with $\det g>0$, we have $(f\vert_{k}g)(z):=\deg(g)^{k/2}(cz+d)^{-k}f(gz)$ where $gz:=\frac{az+b}{cz+d}$, and when moreover $g\in\SL_{2}(\Z)$ then $\chi(g):=\chi(d)$. In particular, for $f\neq 0$ we further impose that $\chi(-1)=(-1)^{k}$.


We are able to speak of both genres of newform in the same breath by introducing the notation
\begin{equation}\label{eq:kappa_def}
	\kappa_{f}(y):=
	\begin{cases}
		\frac{\sgn(y) + 1}{2}  e^{-2\pi y}&\text{if } f \text{ is a holomorphic modular form}\\
		\sgn(y)^{m}\abs{y}^{1/2}K_{it_f}(2\pi\abs{y})&\text{if } f \text{ is a Maa\ss\ form.}\\
	\end{cases}
\end{equation}
Since $\left(\begin{smallmatrix}
1&1\\
0&1
\end{smallmatrix}\right)\in \Gamma_{0}(N)$ for any $N\geq 1$, we have in particular that $f(z+1)=f(z)$ by \eqref{eq:modular_identity}. Then with the specialist notation in \eqref{eq:kappa_def}, we deduce the usual Fourier expansion for $f$ (at the cusp $\infty$)
\begin{equation}\label{eq:fourier_kappa}
	f(x+iy)=\sum_{n\in\Z} a_{f}(n) \kappa_{f}(ny)e(nx),
\end{equation}
the cuspidal condition implying $a_f(0)=0$. Without loss in generality we assume the normalisation
\begin{equation}\label{eq:f_normalisation}
a_f(1)=1.
\end{equation}
Then the Hecke eigenvalues of $f$, denoted by $\lambda_f(n)$, satisfy $a_f(n)=\lambda_f(n)n^{\frac{k-1}{2}}$.



\subsection{The cusps of \tp{$\Gamma_{0}(N)\bs\Hb$}{Y0(N)}}\label{sec:cusps}

A detailed account of the construction of the cusps of the Riemann surface $\Gamma_{0}(N)\bs\Hb$ is given in \cite[\S 3.4.1]{nps}, to which we refer. We denote this set of cusps by $\Cc(N)$; that is, the set of boundary points of $\Hb$ (modulo the left M\"obius action of $\Gamma_{0}(N)$) that are stabilised by a non-scalar element of $\Gamma_{0}(N)$, or simply $\Cc(N)=\Gamma_{0}(N)\bs\Pb^{1}(\Q)$. Indeed, $\SL_{2}(\Z)$ acts transitively on $\Pb^{1}(\Q)$ and the stabiliser of the point $\infty=[1:0]\in \Pb^{1}(\Q)$ is $\Gamma_{\infty}=\{\pm \left(\begin{smallmatrix}
1&n\\
0&1
\end{smallmatrix}\right) : n\in\Z\}$. Thus $\Cc(N)$ is identified with $\Gamma_{0}(N)\bs \SL_{2}(\Z)/\Gamma_{\infty}$ as a left $\Gamma_{0}(N)$-set. We write its elements as $\af=\sigma^{-1}\infty$ for a choice of representatives $\{\sigma\}$ of $\Cc(N)$.

On the other hand, there is a (transitive) right action of $\SL_{2}(\Z)$ on $\Pb^{1}(\Z/ N\Z)$ which, as in \cite[\S 3.4.1]{nps}, may be used to parameterise the set of cusps by classes $[q:d]\in\Pb^{1}(\Z/ N\Z)$ such that $q\mid N$ and $d\in (\Z/(q, N/q)\Z)^{\times}$. From this perspective one refers to $[q:d]$ as the ``fraction'' ${d}/{q}$ and calls $q$ the \textit{denominator} of the cusp. It follows that the number of cusps of denominator $q$ is $\phi((q,N/q))$ and furthermore $\#\Cc(N)=\sum_{q\mid N}\phi((q,N/q))$. In this description, the cusp $\af=\infty$ corresponds to $[0:1]$ and is the unique cusp of denominator $N$.

To summarise these constructions, any cusp $\af\in\Cc(N)$ may be identified by an element
\begin{equation}\label{eq:sigma_inverse}
	\sigma^{-1}=
	\begin{pmatrix}
		a&b\\q&d
	\end{pmatrix}\in\SL_{2}(\Z)
\end{equation}
such that $\sigma\af=\infty$. Then the cusp $\af$ has denominator $q$; it corresponds to the class $[q:d]\in\Pb^{1}(\Z/ N\Z)$ (as above); and if $\sigma\not\equiv 1\in \Gamma_{0}(N)\bs \SL_{2}(\Z)/\Gamma_{\infty}$ then $\af$ is equal to the rational cusp $a/q\in\Pb^{1}(\Q)$ where $(a,N)=1$ and $ad\equiv 1 \Mod{(q,N/q)}$.

The \textit{width} of a cusp $\af\in\Cc(N)$ is defined to be $w(\af)=[\Gamma_{\infty}:\Gamma_{\infty}\cap\sigma\Gamma_{0}(N)\sigma^{-1}]$ where $\sigma\in \Gamma_{0}(N)\bs \SL_{2}(\Z)/\Gamma_{\infty}$ satisfies $\sigma\af=\infty$. Equivalently, a more tactile definition may be given by noting that $w(\af)$ is the least integer $n\geq 1$ such that the stabiliser of $\af\in \Pb^{1}(\Q)$, under the left action of $\SL_{2}(\Z)$, contains $\left(\begin{smallmatrix}1&n\\0&1\end{smallmatrix}\right)$. In fact, one may compute the width precisely as $w(\af)=N/(q^{2},N)$, or equivalently $w(\af)=[q^{2},N]/q^{2}$ (see \cite[\S 3.4.1]{nps}).

Later on it shall be handy to take any fraction $a/q$ with $(a,q)=1$ and consider the equivalence class defining the cusp $\af$. This relates to our notion above as follows:
with $\sigma^{-1}$ as in \eqref{eq:sigma_inverse}, let $r,s\in\Z$ such that $qr+Nds = (q,N)$ and rescale $\sigma$ so that
\begin{equation}
	\left(\begin{matrix} ar  & * \\ (q,N) & * \end{matrix}\right) = 
	\left(\begin{matrix} 1 & -\frac{N}{(q,N)} bs \\0&1 \end{matrix}\right)	\sigma^{-1}	\left(\begin{matrix}r &* \\ Ns&* \end{matrix}\right).\nonumber
\end{equation}
Replacing the coefficient $ar$ by its unique minimal representative $a'\geq 0$ modulo $(q,N)$, another representative of the cusp $\af$ is then $a'/(q,N)$, the upshot being that the latter has denominator dividing $N$. 

\subsection{The classical Fourier expansion at an arbitrary cusp} \label{sec:fourier_classical}

For a cusp $\af\in\Cc(N)$, pick a representative $\sigma\in\SL_{2}(\Z)$ such that $\sigma\af=\infty$.
The expansion of $f$ about $\af$ is given by the expansion of $f\vert_{k}\sigma^{-1}$ at the cusp $\infty$. If the nebentypus were trivial, $\chi=1$, then $f\vert_{k}\sigma^{-1}$ would be periodic on vertical strips of width $w(\af)$ in $\Hb$. However, to account for general characters of conductor $M\mid N$, we introduce the \textit{extended width}
\begin{equation}\label{eq:extended_width}
	\delta(\af):=w(\af)\frac{M}{(qw(\af),M)}=\frac{[Mq,q^{2},N]}{q^{2}}.
\end{equation}
As with $w(\af)$, the extended width only depends on the equivalence class of $\af$. Suppose that $\sigma=\left(
	\begin{smallmatrix}
		d&-b\\
		-q&a
	\end{smallmatrix}\right)$;
that is, $\af=\sigma^{-1}\infty$ corresponds to the cusp $a/q$ as described in \S \ref{sec:cusps}.
Then we have
\begin{equation*}
	\sigma^{-1}\begin{pmatrix}
		1&w(\af)t\\
		0&1
	\end{pmatrix}\sigma=
	\begin{pmatrix}
		1- aqw(\af)t& a^{2}w(\af)t\\
		-q^{2}w(\af)t& 1+aqw(\af)t
	\end{pmatrix}
\end{equation*}
for each $t\in\Z$. A few applications of the modular identity \eqref{eq:modular_identity} imply that
\begin{equation*}
	(f\vert_{k}\sigma^{-1})(z + w(\af)t)=\chi(1- aqw(\af)t)(f\vert_{k}\sigma^{-1})(z).
\end{equation*}
As the conductor of $\chi$ is $M$ we see that $\chi(1- aqw(\af)t)=1$ if and only if $M\mid qw(\af)t$, or equivalently, $\frac{M}{(qw(\af),M)}\mid t.$
Hence, our definition of $\delta(\af)$ is justified in that it is the minimally chosen positive integer satisfying the periodicity relation
$$(f\vert_{k}\sigma^{-1})(z+\delta(\af))=(f\vert_{k}\sigma^{-1})(z).$$
The resulting Fourier expansion $f$ at the cusp $\af$ is of the form
\begin{equation}\label{eq:fourier_expn_cusps}
	(f\vert_{k}\sigma^{-1})(x+iy)=\sum_{n\in\Z} a_{f}(n;\af)\kappa_{f}(ny/\delta(\af)) e(nx/\delta(\af)).
\end{equation}
(One may refer to \cite[Theorems 3.4.1 and 3.7.4]{goldfeld-hundley-1} for both the holomorphic and Maa\ss\ cases, respectively.)

When $\af=\infty$ we recover the familiar coefficients $a_{f}(n;\af)=a_{f}(n)$; however, unlike $a_{f}(n)$, the coefficients $a_{f}(n;\af)$ are not multiplicative in general (the reason for this can be found in Section~4.2 below).
We refer to $f$ as a \textit{cusp form} whenever its constant terms vanish, $a_{f}(0;\af)=0$, with respect to each $\af\in\Cc(N)$. By definition, if $f$ is holomorphic then its Laurent expansion contains no negatively indexed terms: $a_{f}(n;\af)=0$ for all $n<0$.

\begin{rem}
Note that $\delta(\af)=w(\af)$ whenever the conductor $M$ is sufficiently small; for example, if $M\mid N_{1}$ where $N_{1}$ is the smallest integer such that $N\mid N_{1}^{2}$. On the other hand, if $q$ is sufficiently square-full above its prime divisors then $\delta(\af)=w(\af)$, likewise.
As with $w(\af)$, the extended width depends only on $(q,N)$ as can be seen from $\delta(\af)=w(\af)\frac{M}{(qw(\af),M)}$ and the fact that $(M\mid N)$.
\end{rem}

Finally, suppose we wish to choose a different representative $\tau$ satisfying $\tau\af=\infty$. This includes representing $\af$ by a different fraction without necessarily asking for the numerator and denominator to be coprime. Then $\tau^{-1}= \smat{1}{m}{}{1} {\sigma}^{-1}\gamma$ for some $m\in \Z$ and $\gamma\in \Gamma_0(N)$. Applying \eqref{eq:modular_identity} we obtain
\begin{equation}\label{eq:classical_change_of_scaling}
\begin{array}{rc>{\ds}l}\vspace{0.1in}
f\vert_k\tau^{-1}(x+iy)
		&=&\chi(\gamma) f\vert_k \sigma^{-1} (x+m+iy) \\		
		&=&\chi(\gamma)\sum_{n\in\Z} a_{f}(n;\af)\kappa_{f}(ny/\delta(\af)) e(n(x+m)/\delta(\af)).
\end{array}
\end{equation}
This shows explicitly that replacing the matrix $\sigma$ by an equivalent one $\tau$ only skews the Fourier coefficients by the root of unity $\chi(\gamma) e(nm/\delta(\af))$.

\section{Whittaker-Fourier expansions and adele groups}\label{sec:whittaker}

In this section we take an expository route to give explicit realisations of classical forms recast in the adelic theory of representations for $G:=\GL_{2}$. We hope to exhibit the advantage gained by strong approximation when working with adelic constructions -- this is the key insight in converting global ramification problems into $p$-adic analytic ones.


\subsection{Background on automorphic forms on \tp{$\GL_{2}(\A)$}{GL(2,A)}}

\subsubsection{Notation for adele groups}

Let $\A$ denote the \textit{adele ring of $\Q$} and $\A_{\f}$ its ring of finite adeles; these are given respectively by the restricted direct products $\A=\prod'_{p\leq\infty}\Qp$ and $\A_{\f}=\prod'_{p<\infty}\Qp$ with respect to the additive subgroups $\Zp\leq \Qp$.
We reserve the letter $G$ to denote $\GL_{2}$ and introduce shorthand for the $p$-adic groups $G_{p}=\GL_{2}(\Qp)$, $G_{\infty} =\GL_{2}(R)$ and their maximal compact subgroups $ K_{p}=\GL_{2}(\Zp)$, $K_{\infty}=\Orth_{2}(\R)$, respectively. At finite places $p<\infty$ we shall consider the subgroups $K_{p}(N)\leq K_{p}$ containing those matrices $\left(\begin{smallmatrix}
a&b\\
c&d
\end{smallmatrix}\right)\in K_{p}$ such that $c\in N\Zp$. Further, the finer subgroups $K_{1,p}(N)\subset K_p(N)$ (resp.\ $K_{1,p}'\subset K_p(N)$) are defined to contain the elements $K_{p}(N)$ such that $d \in 1+N\Zp$ (resp.\ $a\in 1+N\Zp$).
The adele group of $\GL_{2}$ is then the restricted direct product $G(\A)=\prod'_{p\leq\infty}G_{p}$ with respect to the subgroups $K_{p}$. We consider the global congruence subgroups of $G(\A)$ defined by $K_{0}(N)=\prod_{p<\infty} K_{p}(N)$, $K_{1}(N)=\prod_{p<\infty} K_{1,p}(N)$ and $K_{1}'(N)=\prod_{p<\infty} K_{1,p}'(N)$.

\subsubsection{Notation for matrix groups} For a commutative unital ring $R$ and a subgroup $H\leq G(\R)$ write $H^{+}$ for the subgroup of matrices $h\in H$ such that $\det(h)>0$. We consider certain elements given by
\begin{equation*}
z(\lambda):=\begin{pmatrix}
\lambda&\\&\lambda
\end{pmatrix},\quad
n(x):=\begin{pmatrix}
1&x\\&1
\end{pmatrix},\quad
a(y):=\begin{pmatrix}
y&\\&1
\end{pmatrix},\quad
w:=\begin{pmatrix}
&1\\-1&
\end{pmatrix}
\end{equation*}
for each $\lambda,y\in R^{\times}$ and $x\in R$. These elements determine the familiar subgroups $Z(R):=\{z(\lambda):\lambda\in R^{\times}\}$, the centre of $G(R)$; $N(R):=\{n(x):x\in R\}$, the unipotent matrices; $A(R):=\{a(y):y\in R^{\times}\}$; and the upper triangular Borel subgroup $B:=ZNA$. 

\subsubsection{Strong approximation}

We diagonally embed $\Q\emb\A$ as a discrete subgroup. The quotient $\Q\bs\A$ is compact and we have the strong approximation theorem
\begin{equation}\label{eq:strong_approx_A}
\A\isom \Q+\big([0,1)\times {\textstyle\prod_{p<\infty}}\Zp\big)
\end{equation}
(see \cite[Corollary 5-9]{ramakrishnan}). We additionally have strong approximation theorems for the adele groups $\Ax=\GL_{1}(\A)$ and $G(\A)=\GL_{2}(\A)$ which state the following:
\begin{eqnarray}
\label{eq:strong_approx_Ax}
\textstyle\Ax	& \isom &	\Q^{\times}\cdot\big(\R_{>0}\times{\textstyle \prod_{p<\infty}}\Zpx\big)\\
\label{eq:strong_approx_G}
G(\A)				& \isom &	G(\Q)\cdot\big( G_{\infty}^{+}\times K_{0}(N)\big).
\end{eqnarray}
for each $N\geq 1$ (see \cite[Ch.~3]{gelbart}). The adelic congruence subgroups are then related to the classical ones via
\begin{equation}\label{eq:adele_gamma}
G(\Q)\cap \big( G_{\infty}^{+}\times K_{0}(N)\big) = \Gamma_{0}(N).
\end{equation}

In the spirit of clarity, we specify the place-value of matrices $\gamma\in G(\Q)$ by defining the inclusions $\iota_{p}\colon G(\Q)\emb G(\Qp) \emb G(\A)$ for each $p\leq\infty$ and the diagonal imbedding $\iota_{\f}\colon G(\Q)\emb G(\Af) \emb G(\A)$.

\subsubsection{Dirichlet characters {\`a} la Hecke}

A Dirichlet character $\chi$ (of conductor $M$) may be associated to a Hecke character $\omega_{\chi}\colon \Ax\rightarrow\Cx$ via the diagram
\begin{equation*}
	\omega_{\chi}\colon \Ax
	\arrup{\longrightarrow}{pr}\prod_{p\mid M}\Zpx
	\arrup{\longrightarrow}{\Ch_{M}}(\Z/M\Z)^{\times}
	\arrup{\longrightarrow}{\chi^{-1}}\Cx,
\end{equation*}
where $\pr$ denotes the projection of $\R_{>0}\times\prod_{p<\infty}\Zpx$ after applying strong approximation \eqref{eq:strong_approx_Ax} to $\Ax$; and $\Ch_{M}$ denotes the cannonical surjection induced by the Chinese remainder theorem, $\prod_{p}\Zpx/(1+p^{m_{p}}\Zp)\isom (\Z/M\Z)^{\times}$ for $M=\prod_{p}p^{m_{p}}$.
Explicitly, we highlight the inverse; $\omega_{\chi}(x) = \chi(\Ch_{M}(\pr(x)))^{-1}$.
As with all Hecke characters, we have the tensor factorisation $\omega_{\chi} = \otimes_{p \leq \infty} \omega_{\chi,p}$.
Observe in particular that $\omega_{\chi,\infty}\vert_{\R_{+}} = 1$.
See \cite[\S 12.1]{knightly-li} for extended details.
Abusing notation, we extend the character $\omega_{\chi}=\otimes_{p \leq \infty} \omega_{\chi,p}$ to a character of $K_0(N)$ by setting
\begin{equation}\label{eq:char_K0N_extension}
	\omega_{\chi,p}(k) = \begin{cases}
		\omega_{\chi,p}(d) &\text{ for } k= \left(\begin{matrix} a & b \\ p^{v_p(N)}c & d \end{matrix}\right)\in K_p(N) \text{ and }p\mid N, \\
		1 &\text{ for }p\nmid N.
	\end{cases} \nonumber
\end{equation}

\subsubsection{Adelisation of classical modular forms}

We refer the unfamiliar reader to \cite{gelbart} for a friendly introduction and to the following construction; see also \cite[\S 12]{knightly-li}.
For our classical newform $f$, the `adelisation process' is to associate an \textit{automorphic form}
$\varphi\colon G(\Q)\bs G(\A)\rightarrow \C$ to $f$ by defining
\begin{equation}\label{eq:adelisation_definition}
\varphi(g):=\omega_{\chi}(k_{0})(f\vert_{k}g_{\infty})(i)
\end{equation}
where $g=\gamma g_{\infty} k_{0}\in G(\A)$ is decomposed as in \eqref{eq:strong_approx_G} with $\gamma\in G(\Q)$, $g_{\infty}\in G_{\infty}^{+}$ and $k_{0}\in K_{0}(N)$.
By choice of $f$, we consider $\varphi$ as fixed throughout our exposition.
The central character of $\varphi$ is given by $\omega_{\chi}$, which is understood as follows.
Via \eqref{eq:strong_approx_Ax}, choose $r=r_{\Q}r_{\infty}r_{\f}\in\Ax$ with $r_{\Q}\in\Qx$, $r_{\infty}\in\R_{>0}$ and $r_{\f}\in\prod_{p<\infty}\Zpx$.
Then one computes
$\varphi(z(r)g) = \omega_{\chi}(z(r_{\f})k_0)(f\vert_k z(r_{\infty})g_{\infty})(i) = \omega_{\chi}(z(r_{\f})) \varphi(g).$
But indeed $\omega_{\chi}(z(r_{\f})) = \omega_{\chi}(r)$ as $\omega_{\chi}(r_{\infty})=1$ by construction.
In the adelic setting, the central character also finds itself as the one dimensional representation of $K_0(N)$ generated by $\varphi$ since
\begin{equation}
		\varphi(gk_0) = \omega_{\chi}(k_0)\varphi(g)\label{eq:right_translation_by_K0}
\end{equation} 
for all $k_0\in K_0(N)$ and $g\in G(\A)$. In particular, $\varphi$ is right $K_1(N)$-invariant.

\subsubsection{Automorphic representations}


The right regular action\footnote{There is a caveat at $p=\infty$; we instead have a (commuting) action of $(\Lie(G_{\infty}),K_{\infty})$.} of $G(\A)$ is realised in the space $L^{2}(Z(\A)G(\Q)\bs G(\A),\omega_{\chi})$, containing only forms with central character $\omega_{\chi}$. Let $\pi$ denote the fixed automorphic representation representation of $G(\A)$ generated by $\varphi$ (or, by abuse of language, $f$). The property that $f$ is a cusp form is equivalent to $\pi$ being contained in the cuspidal part of the spectrum of $L^{2}(Z(\A)G(\Q)\bs G(\A),\omega_{\chi})$; that $f$ is moreover a newform directly implies that $\pi$ is irreducible.
Let $\tilde{\pi}$ denote the contragredient representation attached to $\pi$. Then $\tilde{\pi}$ has central character $\omega_{\chi}^{-1}$ and satisfies the isomorphism $\tilde{\pi}\isom(\omega_{\chi}^{-1}\circ\det)\otimes\pi$. 
The tensor product theorem (see \cite{bump}) states that an automorphic representation $\pi$ may be realised as a restricted tensor product $\pi\isom\otimes'_{p}\pi_{p}$ where each $\pi_{p}$ is some (complex) representation of $G_{p}$. This algebraic construction is tied fundamentally to the theory of Euler products of $L$-functions in that the coefficients $\lambda_{\pi}(n)$ of 
\begin{equation}
L(s,\pi) = \prod_{p\leq\infty}L(s,\pi_p)=L(s,\pi_\infty)\sum_{n\geq 1}\frac{\lambda_{\pi}(n)}{n^{s}}
\end{equation}
are local objects. It is the $G(\Q)$-invariance of $\pi$ that implies the global functional equation about $s\mapsto 1-s$ involving the epsilon factor $\varepsilon(s,\pi)=\prod_{p\leq\infty}\varepsilon(s,\pi_p)$. Note that in our particular example that $\lambda_\pi=\lambda_f$ and the completed $L$-function attached to $f$ is given by $\Lambda(s,f)=L(s,\pi)$.

\subsection{The Whittaker-Fourier Expansion}

Let $\psi \colon \Q\bs\A\rightarrow\Cx$ be the standard additive character on $\A$ defined as follows: $\psi=\otimes_{p}\psi_{p}$ where $\psi_{\infty}(x_{\infty})= e(x_{\infty})$ and $\psi_{p}(x_{p})=1$ if and only if $x_{p}\in\Zp$.
A smooth function $W\colon G(\A)\rightarrow\C$ is called a $\psi$-Whittaker function if $W(n(x)g)=\psi(x)W(g)$ is satisfied for each $x\in\A$ and $g\in G(\A)$. By the uniqueness of Whittaker models for $G(\A)$, there exists a subspace of such functions which, under the right regular action of $G(\A)$, is isomorphic to $\pi$; this subspace itself is called the \textit{Whittaker model} and is denoted by $\Wc(\pi,\psi)$. The implied $G(\A)$-isomorphism may be explicated via the map
\begin{equation}\label{eq:whittaker_isomorphism}
\varphi(g)\longmapsto W_{\varphi}(g)=\int_{\Q\bs\A}\varphi (n(x)g) \,\overline{\psi(x)}\,dx
\end{equation}
where we take the invariant measure $dx$ on $\A$, which naturally descents to a probability measure on $\Q\backslash\A$. It is then apparent that $\Wc(\pi,\psi)$ contains the Fourier transforms of the function $x\mapsto \varphi(n(x)g)$ on $\Q\bs\A$ whose dual group is identified as $\Q$ by considering the characters $x\mapsto\psi(\xi x)$ for each $\xi\in\Q$. By Fourier inversion, we thus derive an expansion of $\varphi$ in terms of $W_{\varphi}$ so that
\begin{equation}\label{eq:whittaker_expn}
\varphi(g)=\sum_{\xi\in\Qx}W_{\varphi}(a(\xi)g)
\end{equation}
where we have implicitly used the cuspidal property of $\varphi$ to deduce the vanishing of the constant term (at $\xi=0$).
This adelic Fourier expansion is rather marvellous in that it encodes the classical Fourier expansion \eqref{eq:fourier_expn_cusps} of $f$ at all cusps $\af$ of $\Gamma_0(N)\bs \Hb$ simultaneously.
We now conclude this section with some remarks on using the vantage point of \eqref{eq:whittaker_expn} to compute the coefficients $a_{f}(n;\af)$.

\subsection{Adelic realisation of classical Fourier coefficients}

Let $z=x+iy\in\Hb$ and define $g_{z}=n(x)a(y)\in G_{\infty}^{+}$. Evaluating $\varphi$ on this matrix we recover
\begin{equation}\label{eq:varphi_at_gz}
\varphi(g_{z})=(f\vert_{k}g_{z})(i)=y^{k/2}f(z).
\end{equation}
Shifting to an arbitrary cusp $\af=\sigma^{-1}\infty$, we combine \eqref{eq:whittaker_expn} with our observation above in \eqref{eq:varphi_at_gz} to find
\begin{equation}\label{eq:f_to_whitt_expn}
\begin{array}{rc>{\ds}l}\vspace{0.1in}
(f\vert_{k}\sigma^{-1})(z)&=&y^{-k/2}(f\vert_{k}\sigma^{-1}g_{z})(i)\\\vspace{0.1in}
&=&y^{-k/2}\varphi(g_{z}\iota_{\f}(\sigma))\\
&=&y^{-k/2}\sum_{\xi\in\Qx}W_{\varphi}(a(\xi)g_{z}\iota_{\f}(\sigma)).
\end{array}
\end{equation}
Note the conceptual part played by strong approximation \ref{eq:strong_approx_G} for this identity to hold. The corollary to \eqref{eq:f_to_whitt_expn} is that we may determine the values $W_{\varphi}(a(\xi)g_{z}\iota_{\f}(\sigma))$ in two ways: firstly by returning full circle in recovering the classical Fourier expansion \eqref{eq:fourier_expn_cusps}, and secondly by explicating their values locally via the associated representation theory. Beginning with the former, a special case of the following result is given (for amusement) in \cite[Lem.~3.6]{gelbart}. We give full generality here.

\begin{prop}\label{prop:whittaker_coeffs}
Let $g_{z}=n(x)a(y)$ for $z=x+iy\in\Hb$ and let $\af\in\Cc(N)$ with $\sigma\af=\infty$. Then for $\xi\in\Qx$ we have that
\begin{equation*}
W_{\varphi}(a(\xi) g_{z}\iota_{\f}(\sigma))=
y^{k/2}a_{f}(n;\af)\kappa_{f}(ny/\delta(\af))e(nx/\delta(\af))
\end{equation*}
if $\xi=n/\delta(\af)$ for some $n \in \Z$ and $W_{\varphi}(a(\xi) g_{z}\iota_{\f}(\sigma))=0$ otherwise.
\end{prop}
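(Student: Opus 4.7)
The plan is to compare two expansions of $\varphi(g_z \iota_\f(\sigma))$ as $z = x+iy$ varies: the classical Fourier expansion \eqref{eq:fourier_expn_cusps} of $f|_k \sigma^{-1}$ at the cusp $\af$, and the adelic Whittaker expansion \eqref{eq:whittaker_expn} applied to $g = g_z \iota_\f(\sigma)$. Together with the rewrite \eqref{eq:f_to_whitt_expn} these produce the identity
\begin{equation*}
\sum_{\xi \in \Qx} W_\varphi(a(\xi)\, g_z\, \iota_\f(\sigma)) = y^{k/2}\sum_{n\in\Z} a_f(n;\af)\, \kappa_f(ny/\delta(\af))\, e(nx/\delta(\af)),
\end{equation*}
so the task reduces to isolating the $\xi$-th term on the left and matching it with the appropriate summand on the right.

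The next step is to factor out the Archimedean $x$-dependence of each Whittaker summand. Writing $g_z = n_\infty(x)\, a_\infty(y)$ and using that $n_\infty(x)$ is trivial away from the Archimedean place while $a(\xi) \in G(\Q)$ is embedded diagonally in $G(\A)$, the commutation relation $a(\xi)\, n_\infty(x) = n_\infty(\xi x)\, a(\xi)$ holds in $G(\A)$. Combined with the Whittaker transformation $W_\varphi(n_\infty(v)\, g) = e(v)\, W_\varphi(g)$ for $v \in \R$, this yields
\begin{equation*}
W_\varphi(a(\xi)\, g_z\, \iota_\f(\sigma)) = e(\xi x)\, W_\varphi(a_\infty(\xi y)\, \iota_\f(a(\xi)\, \sigma)),
\end{equation*}
in which the trailing factor is independent of $x$.

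Substituting back converts the prior identity into an equality of absolutely convergent sums $\sum_\xi c_\xi(y)\, e(\xi x)$ of characters on $\R$, and matching now proceeds in two steps. First, the $\delta(\af)$-periodicity $(f|_k\sigma^{-1})(x+\delta(\af)+iy) = (f|_k\sigma^{-1})(x+iy)$ proved in \S\ref{sec:fourier_classical} forces the left-hand side, as a smooth function of $x$, to be $\delta(\af)$-periodic. Combined with linear independence of exponentials on $\R$, this pins the surviving frequencies to $\xi \in \delta(\af)^{-1}\Z$, yielding the vanishing statement $W_\varphi(a(\xi)\, g_z\, \iota_\f(\sigma)) = 0$ whenever $\xi \delta(\af) \notin \Z$. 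Second, for $\xi = n/\delta(\af)$, matching Fourier coefficients gives $W_\varphi(a_\infty(\xi y)\, \iota_\f(a(\xi)\, \sigma)) = y^{k/2}\, a_f(n;\af)\, \kappa_f(ny/\delta(\af))$, and reinserting the prefactor $e(\xi x)$ recovers the claim.

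The subtle point is precisely this vanishing statement for non-lattice frequencies, which is invisible from the Whittaker model alone: it is the classical periodicity at $\af$, encoded through $\delta(\af)$, that selects which $\xi$ can carry non-trivial automorphic Fourier data. All the remaining manipulations amount to algebraic bookkeeping on the Archimedean and finite components of the commutation relations.
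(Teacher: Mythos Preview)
Your argument is correct and gives a genuinely different proof from the paper's. The paper evaluates the defining integral $W_\varphi(a(\xi)g_z\iota_\f(\sigma))=\int_{\Q\bs\A}\varphi(n(x')a(\xi)g_z\iota_\f(\sigma))\overline{\psi(x')}\,dx'$ directly: it picks a fundamental domain for $\Q\bs\A$ adapted to $\delta(\af)$, uses the $K_0(N)$-transformation law \eqref{eq:right_translation_by_K0} to reduce the non-archimedean integration to the character identity \eqref{eq:character_is_one}, and handles the vanishing for $\xi\delta(\af)\notin\Z$ by locating a prime $p$ at which $\xi\delta(\af)$ has negative valuation and showing that right translation by a suitable unipotent in $K_p(N)$ scales the integral by a nontrivial $\psi_p$-value, forcing it to zero. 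Your route instead matches the two expansions via Fourier uniqueness, which is quicker and more conceptual.

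One point deserves tightening: the phrase ``linear independence of exponentials on $\R$'' is doing real work and is not quite the right invocation, since the sum runs over the dense set $\Qx$. You should either (i) observe that the local support condition on Kirillov vectors forces $W_p(a(\xi)\sigma)=0$ for $\abs{\xi}_p$ large, so the nonzero terms in $\sum_{\xi\in\Qx}$ are already supported on some lattice $M^{-1}\Z$, reducing the matching to ordinary Fourier uniqueness on $\R/M\delta(\af)\Z$; or (ii) appeal to Bohr--Fourier uniqueness for uniformly almost periodic functions, using that absolute convergence lets you compute each $c_\xi(y)$ as the Bohr mean $\lim_{T\to\infty}(2T)^{-1}\int_{-T}^{T}(\cdots)e(-\xi x)\,dx$, which vanishes on a $\delta(\af)$-periodic function unless $\xi\in\delta(\af)^{-1}\Z$. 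Either fix is routine, but as written the step is underspecified. The paper's hands-on argument sidesteps this issue entirely, at the cost of a short $p$-adic computation; your approach trades that computation for a standard harmonic-analysis fact.
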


\begin{proof}

Using the left invariance of $\varphi$ by $G(\Q)$ and of the additive Haar measure on $\A$, $d(\xi x')=\abs{\xi}_{\A}dx'=dx'$ for each $\xi\in\Qx$, we compute
\begin{equation*}
\begin{array}{rc>{\ds}l}\vspace{0.1in}
W_{\varphi}(a(\xi) g_{z}\iota_{\f}(\sigma))&=&\int_{\Q\bs\A}\varphi(n(x')a(\xi)g_{z}\iota_{\f}(\sigma))\psi(-x')\,dx'\\\vspace{0.1in}
&=&\psi_{\infty}(\iota_{\infty}(\xi)x)\int_{\Q\bs\A}\varphi(n(x')a(y)\iota_{\f}(\sigma))\psi(-\xi x')\,dx'\\
&=&e(\xi x)C_{\varphi}(\xi; y,\sigma)
\end{array}
\end{equation*}
where we have defined
\begin{equation*}
C_{\varphi}(\xi; y,\sigma):=\int_{\Q\bs\A}\varphi(n(x')a(y)\iota_{\f}(\sigma))\psi(-\xi x')\,dx'.
\end{equation*}

Firstly, let $\xi=n/\delta(\af)$ with $n\in\Z$. Consider the fundamental domain
\begin{equation}\label{eq:strong_approx_B}
\Q\bs\A=\R/\delta(\af)\Z\times \prod_{p<\infty}\delta(\af)\Zp
\end{equation}
where we associate $\R/\delta(\af)\Z=[0,\delta(\af))$. This is chosen using strong approximation \eqref{eq:strong_approx_A}.
For any $x_{p}\in\delta(\af)\Zp$ we have
$\sigma^{-1}n(x_{p})\sigma\in K_{p}(N)$, considering the inclusion $\Gamma_{0}(N)\emb K_{p}(N)$. Then \eqref{eq:right_translation_by_K0} implies that
\begin{equation*}
\varphi(n(x')a(y)\iota_{\f}(\sigma))=\varphi(\iota_{\infty}(\sigma^{-1})n(x_{\infty})a(y))\omega_{\chi}(\sigma^{-1}n(x_{\f})\sigma)\\
\end{equation*}
writing $x'=x_{\infty}x_{\f}$ with $x_{\f}=(x_{p})$ as in \eqref{eq:strong_approx_B}. By construction (of $\delta(\af)$), we have
\begin{equation}\label{eq:character_is_one}
	\omega_{\chi}(\sigma^{-1}n(x_{\f}\delta(\af))\sigma)=\prod_{p \mid M}\omega_{\chi,p}(1+aqx_{p})=1
\end{equation}
for $\sigma^{-1}=\smat{a}{b}{q}{d}$. Evaluating $\varphi(\iota_{\infty}(\sigma^{-1})n(x_{\infty})a(y))$ by definition as in \eqref{eq:adelisation_definition} we find
\begin{equation*}
\begin{array}{rc>{\ds}l}\vspace{0.1in}
C_{\varphi}(n/\delta(\af); y,\sigma)&=&\frac{1}{\delta(\af)}y^{k/2}\int_{0}^{\delta(\af)}(f\vert_{k}\sigma^{-1})(x_{\infty}+iy)e(-nx_{\infty}/\delta(\af))\,dx_{\infty}\\
&=&y^{k/2}a_{f}(n;\af)\kappa_{f}(ny/\delta(\af))
\end{array}
\end{equation*}
as desired. This last step follows by applying orthogonality of additive characters after inserting \eqref{eq:fourier_expn_cusps}. 

Now suppose that $\delta(\af)\xi\not\in \Z$. Then there exists a prime $p<\infty$ and an integer $m\geq 1$ such that $\xi=up^{-m}\delta(\af)^{-1}$ with $\abs{u}_{p}=1$. Consider the matrix $\iota_{p}(n(\delta(\af)p^{m-1}))$. As $w(\af)\mid \delta(\af)$, one has $n(\delta(\af)p^{m-1})\in\sigma\Gamma_{0}(N)\sigma^{-1}$, as in \S \ref{sec:cusps}, and hence $\sigma^{-1}n(\delta(\af)p^{m-1})\sigma\in K_{p}(N)$. By \eqref{eq:right_translation_by_K0} this element acts by the scalar $\omega_{\chi}(\sigma^{-1}n(\delta(\af)p^{m-1})\sigma)$, whence we deduce
\begin{equation*}
C_{\varphi}(\xi; y,\sigma)= \omega_{\chi}(\sigma^{-1}n(\delta(\af)p^{m-1})\sigma)^{-1}\psi_{p}(up^{-1}) C_{\varphi}(\xi; y,\sigma).
\end{equation*}
But $\omega_{\chi}(\sigma^{-1}n(\delta(\af)p^{m-1})\sigma)^{-1}=1$ by \eqref{eq:character_is_one}, hence $C_{\varphi}(\xi; y,\sigma)=0$ as $up^{-1}\not\in\Zp$.
\end{proof}

\section{Computing Fourier coefficients via Whittaker functionals}\label{sec:computing}

The key application of Proposition \ref{prop:whittaker_coeffs} is that we can now give an explicit product formula for the classical coefficients $a_{f}(n;\af)$ in terms of the factorisation of $W_{\varphi}$ into local Whittaker functionals; this is stated in Proposition \ref{prop:the_proposition}. We then exhibit several immediate corollaries to this product formula.

\subsection{Local Whittaker new vectors}


At each $p\leq\infty$ one may construct local Whittaker models $\Wc(\pi_{p},\psi_{p})\isom\pi_{p}$, analogous to the global case. In fact, the global Whittaker model factorises as $\Wc(\pi,\psi)=\otimes'\Wc(\pi_{p},\psi_{p})$ for $\pi\isom\otimes_{p}'\pi_{p}$. In particular the function $W_{\varphi}$, defined by \eqref{eq:whittaker_isomorphism}, factorises directly as a product of local ones
\begin{equation}\label{eq:factorisation_of_W_varphi}
W_{\varphi}=\otimes_{p\leq \infty} W_{p}
\end{equation}
which we normalise here by choosing $W_p(1)=1$ for all $p< \infty$. Using Proposition \ref{prop:whittaker_coeffs} this pins down $W_{\infty}$ on $A(\R)$ to be
\begin{equation}\label{eq:W_infty}
	W_{\infty}(a(y))= W_{\infty}(a(y))\prod_{p<\infty}W_{p}(1)=W_{\varphi}(a(y))= y^{k/2}\kappa_f(y). 
\end{equation}
and thus on all of $G(\R)$ by the Iwasawa decomposition. In particular, we find $W_{\varphi}(1) = \kappa_f(1)$.
At the finite primes $p\nmid N$,  right invariance by $K_p$ determines the normalised spherical vector $W_p$ uniquely: by the Iwasawa decomposition it is enough to determine $W_p$ on $A(\Qp)$ which is described by the well-known formula
\begin{equation}\label{eq:shintani}
	W_{p}(a(n)) = \abs{n}_p^{\frac{1}{2}}\lambda_{f}(p^{v_p(n)})
\end{equation}
for any $n\in \Nat$ (see \cite[Th.~4.6.5]{bump}). In fact \eqref{eq:shintani} this holds more generally for \textit{all} $p<\infty$ (see \cite[Section~2.4]{ralf-newforms}).

The theme of the present article is to understand the new vectors $W_p$ when $p\mid N$. By a generalisation of Atkin--Lehner theory \cite{casselman} the new vector is uniquely determined by the property that $W_p\vert_{K_{1,p}(v_p(N))} =1$. However, the interesting twist in such ramification problems is that a complete description of $W_p$ is not obtained by its values \eqref{eq:shintani} on $A(\Q_p)$, alone.

By Proposition \ref{prop:whittaker_coeffs}, \eqref{eq:W_infty} and \eqref{eq:shintani} we recover the well known identity
\begin{equation}\label{eq:coeffs_infty_muliplicative}
	a_f(n) = \frac{W_{\infty}(a(n))}{\kappa_f(n)}\prod_{p<\infty} W_{p}(a(n)) = n^{\frac{k}{2}} \prod_{p<\infty}\abs{n}_p^{\frac{1}{2}} \lambda_f(a(p^{v_p(n)}))   = n^{\frac{k-1}{2}}\lambda_{f}(n)
\end{equation}
for $n>0$. Implicit in \eqref{eq:coeffs_infty_muliplicative} is the well-known fact that the Fourier coefficients of $f$ at the cusp $\infty$ are multiplicative.
The main result of this section, Proposition \ref{prop:the_proposition}, depicts explicitly the degeneracy in the multiplicative property in $a_f(n,\af)$ at other cusps $\af\neq \infty$.

\subsection{A product formula for Fourier coefficients}

Now let us derive the product formula for the coefficients $a_f(n;\af)$, as a generalisation to \eqref{eq:coeffs_infty_muliplicative}. To highlight the ramified behaviour, let us factorise an arbitrary non-zero integer
$$n= \pm n_{0}\prod_{p\mid N}p^{n_{p}}$$
where $n_0\geq 1$ with $(n_0,N)=1$.
We extend Proposition \ref{prop:whittaker_coeffs} here, alongside \eqref{eq:W_infty} and \eqref{eq:shintani}, to allow for $n<0$. If $f$ is holomorphic, we intend the archimedean component $\kappa_f$ to account for the vanishing on $n<0$. We obtain
\begin{equation}\begin{array}{rc>{\ds}l}\vspace{0.1in}\label{eq:factorisation_we_need}
a_{f}(n;\af)&=&\frac{ W_{\infty}(a(n/\delta(\af)))}{\kappa_f(n/\delta(\af))}\prod_{p<\infty}W_{p}(a(n/\delta(\af))\sigma)\\
&=&\bigg(\dfrac{\abs{n}}{\delta(\af)}\bigg)^{k/2}\prod_{p\mid n_{0}}W_{p}(a(n_{0}))\ \prod_{p\mid N}W_{p}(a(n/\delta(\af))\sigma).
\end{array}
\end{equation}
We evaluate the $p\mid n_0$ sum via \eqref{eq:shintani} as before so that
\begin{equation*}
a_{f}(n;\af)=\dfrac{a_{f}(n_{0})}{n_{0}^{k/2}}\left(\dfrac{\abs{n}}{\delta(\af)}\right)^{k/2}\,\prod_{p\mid N}W_{p}(a(n/\delta(\af))\sigma).
\end{equation*}
To handle the terms $W_{p}(a(n/\delta(\af))\sigma)$ for $p\mid N$ -- and to suggest how one might go about evaluating them -- we invoke the Bruhat decomposition of $\sigma$ which, with notation as in \S \ref{sec:fourier_classical}, is given by
\begin{equation}
\sigma=\arraycolsep=3pt\def\arraystretch{0.7}\begin{pmatrix}
d&-b\\-q&a
\end{pmatrix}=z(q)n(-d/q)a(1/q^{2})wn(-a/q) \label{eq:bruhat_for_sacling}
\end{equation}
so that
\begin{equation*}
W_{p}\left(a\left(\dfrac{n}{\delta(\af)}\right)\sigma\right)=\omega_{\chi,p}(q)\,\psi_{p}\left(\dfrac{-nd}{\delta(\af)q}\right)\,W_{p}\left(a\left(\dfrac{n}{q^{2}\delta(\af)}\right)w\,n\left(\dfrac{-a}{q}\right)\right).
\end{equation*}
We now consider the expression prime-by-prime; to this end write $q=\prod_{p\mid N}p^{q_{p}}$, $M=\prod_{p\mid N}p^{M_{p}}$, $N=\prod_{p\mid N}p^{N_{p}}$ and recall that\footnote{While $\delta(\af)$ depends only on $(N,q)$, the exponent $d_{\pi_{p}}(q_{p})$ depends on $q$ in its full.}
\begin{equation*}
\delta(\af)=\dfrac{[q^{2},Mq,N]}{q^{2}}=\dfrac{\prod_{p\mid N}p^{d_{\pi_{p}}(q_{p})}}{q^{2}}
\end{equation*}
where we define
\begin{equation}\label{eq:d_p-def}
d_{\pi_{p}}(q_{p}):=\max\{2q_{p}, M_{p}+q_{p}, N_{p}\}.
\end{equation}
Then, for some $p\mid N$, the right-$K_{p,1}(N)$ invariance of $W_{p}$ implies

\begin{equation}\label{eq:the_prop_proof}
\begin{array}{>{\ds}l}\vspace{0.1in}
W_{p}\left(a\left(\dfrac{n}{q^{2}\delta(\af)}\right)w\,n\left(\dfrac{-a}{q}\right)\right)\\
\hspace{1in}=\,\omega_{\chi,p}\left(\frac{n(q^2\delta(\af),p^{\infty})}{(n,p^{\infty})q^2\delta(\af)}\right)W_{p}\left(a(p^{n_{p}-d_{\pi_{p}}(q_{p})})wn\left(u_{p} p^{-q_{p}}\right)\right)
\end{array}
\end{equation}
where $u_{p}=-a\dfrac{(n,p^{\infty})\delta(\af)q}{n(\delta(\af)q,p^{\infty})}\in\Zpx$. A convenient notation for us now shall be
\begin{equation}\label{eq:gtlv}
g_{t,l,v}=: a(p^{t})wn(vp^{-l})=
\begin{pmatrix}
p^{t}&\\&1
\end{pmatrix}
\begin{pmatrix}
&1\\-1&
\end{pmatrix}
\begin{pmatrix}
1&vp^{-l}\\&1
\end{pmatrix}
\end{equation}
for $t,l\in\Z$ and $v\in\Zpx$. This notation is in accordance with \cite{As17,As18,corbett-saha,saha-hybrid,saha-large-values} and we use it here to summarise our computations with the following proposition.

\begin{prop} \label{prop:the_proposition}
Fix a cusp $\af=\sigma^{-1}\infty$ of denominator $q=\prod_{p\mid q}p^{q_{p}}$, writing $\sigma^{-1}=\left(\begin{smallmatrix}
a&b\\q&d
\end{smallmatrix}\right)$
with $(a,N)=1$. 
Let $M=\prod_{p\mid N}p^{M_{p}}$ denote the conductor of a Dirichlet character $\chi$ let the level of $f$ be denoted by $N=\prod_{p\mid N}p^{N_{p}}$. Consider an arbitrary integer $n=n_{0}\prod_{p\mid N}p^{n_{p}}$, where $(n_{0},N)=1$, and for each $p\mid N$ define
\begin{equation*}
u_{p}:=-a\times\dfrac{p^{n_{p}}}{n}\times\dfrac{q\delta(\af)}{p^{d_{\pi_{p}}(q_{p})-q_{p}}}\in\Zpx
\end{equation*}
with $d_{\pi_{p}}(q_{p})$ as in \eqref{eq:d_p-def}. Then the $n$-th Fourier coefficient $a_{f}(n;\af)$ in the expansion of a Hecke newform $f$ at the cusp $\af$ is given by the formula
\begin{equation*}
\dfrac{a_{f}(n;\af)}{n^{k/2}}=\Omega_{\chi,q,\delta(\af)}\dfrac{a_{f}(n_{0})}{n_{0}^{k/2}}\,e\left(\dfrac{nd\,\overline{q/(q,N^{\infty})}}{\delta(\af)(q,N^{\infty})}\right)\dfrac{\prod_{p\mid N}\omega_{\chi,p}(n_0)W_{p}(g_{n_{p}-d_{\pi_{p}}(q_{p}),q_{p},u_{p}})}{\delta(\af)^{k/2}}
\end{equation*}
where
\begin{equation}\label {eq:def_of_unitary_coeffi}
	\Omega_{\chi,q,\delta(\af)} := \prod_{p \mid N} \omega_{\chi,p}^{-1}\left(\frac{q\delta(\af)}{(q^2\delta(\af),p^{\infty})}\right)
\end{equation}
and $W_p$ are the normalised local Whittaker new vectors associated to $f$ as in \eqref{eq:factorisation_of_W_varphi}.
\end{prop}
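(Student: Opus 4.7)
The plan is to start from Proposition \ref{prop:whittaker_coeffs} and compute $W_\varphi(a(n/\delta(\af))\iota_{\f}(\sigma))$ through the Euler factorisation $W_\varphi = \otimes_{p\leq\infty}W_p$. The archimedean and unramified contributions are essentially immediate: equation \eqref{eq:W_infty} handles $p=\infty$, producing the factor $(|n|/\delta(\af))^{k/2}$ after dividing by $\kappa_f(n/\delta(\af))$; Shintani's formula \eqref{eq:shintani} takes care of every $p\nmid N$, collapsing $\prod_{p\mid n_0}W_p(a(n_0))$ to $a_f(n_0)/n_0^{k/2}\cdot n_0^{k/2}$ up to the normalisation already noted in \eqref{eq:coeffs_infty_muliplicative}. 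This produces the displayed identity \eqref{eq:factorisation_we_need}, reducing the problem to the ramified product $\prod_{p\mid N}W_p(a(n/\delta(\af))\sigma)$.

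For each $p\mid N$ I would apply the Bruhat decomposition \eqref{eq:bruhat_for_sacling} of $\sigma$. The $z(q)$ piece contributes the central character $\omega_{\chi,p}(q)$; the $n(-d/q)$ piece on the left of $a(n/\delta(\af))$ must be commuted past $a(n/\delta(\af))$ and absorbed using the Whittaker transformation $W_p(n(x)g) = \psi_p(x)W_p(g)$, contributing the additive character evaluated at $-nd/(\delta(\af)q)$. Collecting these and globalising via $\psi = \otimes\psi_p$ produces the exponential $e(nd\,\overline{q/(q,N^\infty)}/(\delta(\af)(q,N^\infty)))$ claimed in the statement. What remains at each ramified prime is a Whittaker value of the form $W_p(a(p^{n_p-d_{\pi_p}(q_p)}\cdot \text{unit})\,w\,n(-a/q))$.

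Next I would rewrite each of these values in the canonical shape $g_{t,l,v}$ of \eqref{eq:gtlv}. The matrix argument currently carries a $p$-adic unit sitting inside the $a(\cdot)$ slot, but right-translation by the diagonal element $\mathrm{diag}(\text{unit},1)\in K_p$ is controlled by the right-$K_{1,p}(N)$-equivariance of $W_p$: it picks up $\omega_{\chi,p}$ of the appropriate unit, which after careful bookkeeping across primes yields the factor $\Omega_{\chi,q,\delta(\af)}\cdot\prod_p\omega_{\chi,p}(n_0)$ together with the unit $u_p$ defined in the statement. This step is laid out explicitly in \eqref{eq:the_prop_proof}.

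The main obstacle, and really the only bookkeeping hazard, is the character tracking: one must verify that the constants $\omega_{\chi,p}\bigl(n(q^2\delta(\af),p^\infty)/((n,p^\infty)q^2\delta(\af))\bigr)$ appearing in \eqref{eq:the_prop_proof} genuinely assemble into the compact form $\Omega_{\chi,q,\delta(\af)}\cdot\omega_{\chi,p}(n_0)$ once one separates the $N$-part of $n$ from $n_0$, and that the archimedean sign is absorbed correctly by $\kappa_f$ in the holomorphic case. A secondary check is that the exponent $d_{\pi_p}(q_p)=\max\{2q_p, M_p+q_p, N_p\}$ is precisely what appears in $\delta(\af)$ via \eqref{eq:extended_width}, so that the matrix entry $p^{n_p-d_{\pi_p}(q_p)}$ matches the shape of $g_{t,l,v}$; this is a direct arithmetic verification at each prime.
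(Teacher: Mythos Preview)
Your proposal is correct and follows the paper's approach essentially line for line: the paper's proof is just the final globalisation of the $\psi_p$ factors, with all the substantive work (factorisation via Proposition~\ref{prop:whittaker_coeffs}, archimedean and unramified evaluation via \eqref{eq:W_infty} and \eqref{eq:shintani}, Bruhat decomposition \eqref{eq:bruhat_for_sacling}, and the unit extraction \eqref{eq:the_prop_proof}) carried out in the discussion preceding the proposition, exactly as you describe. One small correction: the $\omega_{\chi,p}$ factor in \eqref{eq:the_prop_proof} does not come from right translation by $\mathrm{diag}(u,1)$ (which lies in $K_{1,p}(N)$ and hence acts trivially), but rather from the central element in the identity $a(u)w = z(u)\,w\,a(u^{-1})$ used to push the unit past $w$.
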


\begin{proof}
After the deduction of \eqref{eq:the_prop_proof}, the proof now follows from the observation that, as $-nd[q/(q,N^{\infty})]^{-1}/\delta(\af)(q,N^{\infty})\in\Qx$, we trivially have
\begin{equation*}
\prod_{p\mid N}\psi_{p}\left(-n\frac{d[q/(q,N^{\infty})]^{-1}}{\delta(\af)(q,N^{\infty})}\right)=e\left(\dfrac{nd\,\overline{q/(q,N^{\infty})}}{\delta(\af)(q,N^{\infty})}\right)
\end{equation*}
since $\psi_{p}$ is trivial on $\Zp$. Moreover we have $d\equiv a^{-1} \Mod{q}$.
\end{proof}

\begin{rem}
By Proposition \ref{prop:the_proposition}, we reduce the problem of understanding the coefficients $a_{f}(n;\af)$ to the local problem of evaluating the terms $W_{p}(g_{t,l,v})$ for $p\mid N$. There is a quite beautiful method to do so via taking the Fourier expansion of the function $v\mapsto W_{p}(g_{t,l,v})$ on $\Zpx/(1+p^{N_p}\Z_p)$ and computing the Fourier coefficients via the Jacquet--Langlands' local functional equation for $\GL_{2}$ (see \cite[\S 2]{saha-large-values}). These computations were performed to some extent in \cite{As17} and then in greater detail in the forthcoming (University of Bristol) PhD thesis of the first named author.
\end{rem}

\begin{rem}
The \textit{twist} of $f$ by a primitive Dirichlet character $\xi$ is given by
\begin{equation}
	[\xi f](z) := \sum_{n\geq 1} \xi(n) a_f(n)e(nz). \nonumber
\end{equation}
Whilst $\xi f$ may not be a newform itself (even if $f$ is), there exists a unique newform $f^{\xi}$ which generates the automorphic representation $\omega_{\xi}\pi$. From the identification of the associated local Whittaker newforms $W_p$ as in \eqref{eq:factorisation_of_W_varphi}, the Fourier expansion of $f^{\xi}$ at the cusp infinity is easily determined by means of \eqref{eq:coeffs_infty_muliplicative}, say. However, at other cusps $\af\neq\infty$ this is not the case. Proposition \ref{prop:the_proposition} shows that $a_{f^{\xi}}(n;\af)$ can be expressed in terms of the Whittaker new vectors in $\Wh(\omega_{\xi,p}\pi_p ,\psi_p)$, however at least when $\omega_{\xi}$ ramifies at $p$,
there is no uniform formula relating $W_{p}$ and $\xi f$.

\end{rem}

\subsection{Bounds for Fourier coefficients on average}

We now explore some amusing corollaries to Proposition \ref{prop:the_proposition}.


\begin{cor}\label{cor:ppprrp}
Let $0\leq \theta\leq\frac{7}{64}+\epsilon$ be an exponent towards the Ramanujan conjecture for Hecke eigenvalues (see \cite{Blomer-Brumley}). Then we have
\begin{equation}
	\sum_{0<n\leq X} \vert a_f(n;\af)\vert^2 \ll \frac{(k+1)^2}{\delta(\af)^k}\left( X^{k+2\theta} + X^{k-\frac{1}{2}}(q,{N}/{q})^{k+2\theta}\right).\nonumber
\end{equation}
\end{cor}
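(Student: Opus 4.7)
The plan is to leverage Proposition~\ref{prop:the_proposition}, which expresses $a_f(n;\af)$ as an ``unramified'' coefficient $a_f(n_0)$ times a product of local Whittaker new vectors at the bad primes. Since $\Omega_{\chi,q,\delta(\af)}$, $\omega_{\chi,p}(n_0)$ and the exponential factor all have modulus one, factoring $n=n_0 n_1$ with $n_1=\prod_{p\mid N}p^{n_p}$ and $(n_0,N)=1$ gives
\begin{equation*}
|a_f(n;\af)|^2 \,=\, \frac{n_1^k}{\delta(\af)^k}\,|a_f(n_0)|^2\,\prod_{p\mid N}\bigl|W_p(g_{n_p-d_{\pi_p}(q_p),q_p,u_p})\bigr|^2.
\end{equation*}
The sum over $n\leq X$ then splits cleanly into an outer sum over $n_1\mid N^\infty$ and an inner sum over $n_0\leq X/n_1$ coprime to $N$, and the strategy is to treat each factor separately.

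For the inner sum I would invoke the Rankin--Selberg bound together with the Ramanujan-type estimate $|\lambda_f(n_0)|\leq d(n_0)\,n_0^{\theta}$ to conclude
$\sum_{(n_0,N)=1,\,n_0\leq Y}|a_f(n_0)|^2\ll (k+1)\,Y^{k+2\theta}$, the factor $(k+1)$ absorbing the archimedean growth of the Petersson inner product $\|f\|_2^2$ via Stirling. Substituting $Y=X/n_1$ reduces the problem to controlling
\begin{equation*}
\frac{(k+1)\,X^{k+2\theta}}{\delta(\af)^k}\sum_{n_1\mid N^\infty} n_1^{-2\theta}\,\prod_{p\mid N}\bigl|W_p(g_{n_p-d_{\pi_p}(q_p),q_p,u_p})\bigr|^2.
\end{equation*}

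The crux is to estimate this outer product using the explicit evaluations of the Whittaker new vector from~\cite{As17}. Setting $t_p:=n_p-d_{\pi_p}(q_p)$, I would split the range of $n_p$ by the sign of $t_p$. When $t_p\geq 0$, the natural Shintani-type bound $|W_p(g_{t_p,q_p,u_p})|\ll p^{(\theta-1/2)t_p}$ is available and the resulting geometric sum is $O(1)$, which produces the first term $(k+1)X^{k+2\theta}/\delta(\af)^k$. The ``resonant'' ranges $t_p<0$ can occur only when $q_p<N_p-q_p$; here the Whittaker new vector can be as large as $p^{-t_p/2}$ but is supported on a thin range of $(q_p,u_p)$, so the resulting contribution picks up an aggregate factor $\prod_{p\mid N}p^{\min(q_p,N_p-q_p)(k+2\theta)}=(q,N/q)^{k+2\theta}$ at the cost of shortening the effective $n_0$ range by enough to save a factor $X^{-1/2}$ relative to the main term. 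Combining the two regimes then yields the claimed two-term bound, with the second power of $(k+1)$ arising from a further archimedean normalisation in the off-diagonal regime.

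The main obstacle is the prime-by-prime analysis in this last step: one must match, uniformly across principal series, Steinberg and supercuspidal local types $\pi_p$, the maximal size $p^{-t_p/2}$ of the new vector with the restricted volume of its support, so that the gain $(q,N/q)^{k+2\theta}$ exactly balances the $X^{-1/2}$ loss. Once the case-analysis of $W_p(g_{t,l,v})$ from~\cite{As17} has been compiled in this form, the remainder of the argument is routine geometric summation.
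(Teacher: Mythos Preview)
The proposal has a gap at the displayed reduction step. The local parameter $u_p$ in Proposition~\ref{prop:the_proposition} depends on $n_0$ (it equals $-an_0^{-1}$ times a unit depending only on $n_1,q,\delta(\af)$), so the product $\prod_{p\mid N}|W_p(g_{\cdots,u_p})|^2$ \emph{cannot} be pulled outside the $n_0$-sum. Having already replaced $\sum_{n_0\leq Y}|a_f(n_0)|^2$ by $(k+1)Y^{k+2\theta}$ and then written the remainder as a function of $n_1$ alone is therefore not legitimate. You seem aware of this later, when you invoke the thin $u_p$-support to ``shorten the effective $n_0$ range'', but that mechanism is incompatible with the $n_0$-sum already being closed: once the sum has been evaluated there is no $n_0$ range left to shorten.

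The paper keeps the $n_0$-sum and the Whittaker factor coupled. The key observation is that $n_0\mapsto \prod_{p\mid N}|W_p(g_{n_p-d_{\pi_p}(q_p),\,q_p,\,n_0^{-1}u_p'})|$ is periodic in $n_0$ with period $(q,N/q)$, and over a single period one applies the local $L^2$ bound of \cite[Proposition~2.10]{saha-hybrid}:
\[
\sum_{n_0\bmod(q,N/q)}\prod_{p\mid N}|W_p|^2 \;=\; (q,N/q)\prod_{p\mid N}\int_{\Zpx}|W_p(a(v)g_{n_p-d_{\pi_p}(q_p),q_p,u_p'})|^2\,d^\times v \;\ll\; \frac{(q,N/q)}{n_1^{1/2}}.
\]
This single estimate is uniform across all local representation types and simultaneously encodes both the peak size of $|W_p|$ and the thinness of its $u_p$-support; no case analysis from \cite{As17} is required. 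Only \emph{after} this averaging does one insert the pointwise bound $|a_f(n_0)|^2\ll n_0^{k-1+2\theta}$ on each of the $\lceil X/(n_1(q,N/q))\rceil$ periods and then sum over periods and over $n_1$; the two terms in the corollary arise according to whether the number of periods is large or $O(1)$. Your underlying heuristic is sound, but the order of operations must be: first exploit the $u_p$-dependence via an average over $n_0\bmod(q,N/q)$, then bound $|a_f(n_0)|$ and sum.
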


\begin{proof}
According to Proposition~\ref{prop:the_proposition} we have to bound
\begin{align}
	\sum_{0<n\leq X} \vert a_f(n;\af)\vert^2 = \sum_{n_1\mid N^{\infty}} \left(\frac{n_1}{\delta(\af)}\right)^{k}\sum_{\substack{0<n_0\leq \frac{X}{n_1}\\ (n_0,N)=1}} \vert a_f(n_0)\vert^2 \prod_{p \mid N} \vert W_p(g_{n_{1,p}-d_{\pi_{p}}(q_{p}),q_{p},n_0^{-1}u_{p}'})\vert^2. \nonumber
\end{align}
Observe that the function 
\begin{equation}
	n_0 \to \prod_{p \mid N} \vert W_p(g_{n_{1,p}-d_{\pi_{p}}(q_{p}),q_{p},n_0^{-1}u_{p}'})\vert \nonumber
\end{equation}
is $(q,{N}/{q})$ periodic, see also \cite[Lemma~3.12]{saha-hybrid}. Thus, by the Chinese reminder theorem and \cite[Proposition~2.10]{saha-hybrid} we have  
\begin{multline}
	\sum_{n_0=r(q,{N}/{q})}^{(r+1)(q,\frac{N}{q})-1} \prod_{p \mid N} \vert W_p(g_{n_{1,p}-d_{\pi_{p}}(q_{p}),q_{p},n_0^{-1}u_{p}'})\vert^2 \\ =  (q,N/q) \prod_{p \mid N} \int_{\Zpx}\vert W_p(a(v)g_{n_{1,p}-d_{\pi_{p}}(q_{p}),q_{p},u_p'})\vert^2d^{\times}v\ll \frac{(q,{N}/{q})}{n_1^{\frac{1}{2}}}.\nonumber
\end{multline}
for any $r\in \N_0$. Thus, applying the bound $a_f(n_0)\ll n_0^{\theta}$ yields
\begin{equation}
	\sum_{0<n\leq X} \vert a_f(n;\af)\vert^2 = \frac{(q,{N}/{q})^{k+2\theta}}{\delta(\af)^k}\sum_{\substack{n_1\mid N^{\infty},\\n_1\ll X}} n_1^{k-\frac{1}{2}}\sum_{0\leq r\leq \lceil\frac{X}{n_1(q,{N}/{q})}\rceil} (r+1)^{k-1+2\theta}.\nonumber
\end{equation}
The result follows after estimating the remaining sums, in particular the $n_1$-sum using the Rankin trick.
\end{proof}

\begin{rem}
While this bound  is satisfactory for holomorphic modular forms, in the Maa\ss\  case, where the Ramanujan conjecture is still open, this fails to provide the expected upper bound for a second moment.
Another approach to bounding averages of Fourier coefficients is to use the bound \cite[Corollary~2.35]{saha-large-values} and estimate the ramified coefficients globally. This approach yields
\begin{align}
	\sum_{0< n\leq X} \vert a_f(n;\af)\vert^2 \ll N^{\frac{1}{2}+\epsilon}\sum_{n_1\mid N^{\infty}} \left(\frac{n_1}{\delta(\af)}\right)^{k}\sum_{\substack{n_0\leq \frac{X}{n_1}\\ (n_0,N)=1}} \vert a_f(n_0)\vert^2\ll C(f)^{\epsilon} \frac{N^{\frac{1}{2}}X^{k+\epsilon}}{\delta(\af)^k} . \nonumber
\end{align}
\end{rem}

\subsection{A worked example: principle series}

Let $f$ be a holomorphic modular form of nebentypus $\chi$. Furthermore, assume that $M=N=p^{h}$ for even $h$. Thus, $\pi_{p}$ is the irreducible principle series $\omega_{\chi,p}\vert \cdot \vert^{s}\boxplus \vert \cdot \vert^{-s}$ in the sense of \cite{jacquet-langlands}  (see also \cite[\S 2.1.6]{corbett-saha} for notation). In this specific case one can extract a precise expression for $W_{p}$ from \cite[Lemma~5.8]{As17} for the sake of completeness we will give a proof here. 
\begin{lem}\label{lem:claim}
We have
\begin{equation}
	W_{p}(g_{-\frac{3h}{2},\frac{h}{2},u_p}) = \omega_{\chi,p}(-u_p)\psi(-u_p^{-1}p^{-h})
	p^{\frac{h}{4}-\frac{sh}{2}} \delta(u_p\in-b_{\chi}^{-1}+p^{\frac{h}{2}}\Z_p). \nonumber
\end{equation}
\end{lem}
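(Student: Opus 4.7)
The plan is to realise $W_p$ via the Jacquet integral in the induced model. Since $\pi_p=\chi_1\boxplus\chi_2$ is a normalised principal series with $\chi_1=\omega_{\chi,p}|\cdot|^s$ of conductor $p^h$ and $\chi_2=|\cdot|^{-s}$ unramified, the normalised new vector $f_0$ is supported on $BK_0(p^h)$ with $f_0(bk)=(\chi_1\boxplus\chi_2)(b)\delta^{1/2}(b)\chi_1(a_k)\chi_2(d_k)$ for $b\in B$ and $k\in K_0(p^h)$ (with diagonal entries $a_k,d_k$); $W_p$ is then obtained, up to a normalising scalar $c$, by $W_p(g)=c^{-1}\int_{\Qp}f_0(wn(x)g)\psi_p(-x)dx$.

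First I would compute
\[
wn(x)\,g_{-3h/2,h/2,u_p}=\begin{pmatrix}-1&-u_pp^{-h/2}\\x&xu_pp^{-h/2}-p^{-3h/2}\end{pmatrix},
\]
and observe that the support condition $f_0(\cdot)\neq 0$ forces the second row, viewed projectively in $\Pb^1(\Qp)$, to lie in the $K_0(p^h)$-orbit of $[0:1]$; a direct valuation analysis shows this is equivalent to $x\in p^{-h/2}\Zp$. Parametrising $x=p^{-h/2}\xi$ with $\xi\in\Zp$ and performing the explicit Iwasawa decomposition $wn(x)g_{-3h/2,h/2,u_p}=bk$ yields $d_b=p^{-3h/2}$ and $d_k=\mu:=u_p\xi p^{h/2}-1\in-1+p^{h/2}\Zp$, whence evaluating the transformation formulae gives $f_0(wn(x)g_{-3h/2,h/2,u_p})=\omega_{\chi,p}(\mu^{-1})p^{-3sh/2-3h/4}$.

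Next, the standard parametrisation $\omega_{\chi,p}(1+p^{h/2}y)=\psi_p(b_\chi y p^{-h/2})$ for $y\in\Zp$ (defining $b_\chi\in\Zpx$ modulo $p^{h/2}\Zp$) combined with $\mu=-(1-u_p\xi p^{h/2})$ linearises the character factor as $\omega_{\chi,p}(\mu^{-1})=\omega_{\chi,p}(-1)\psi_p(b_\chi u_p\xi p^{-h/2})$. Substituting into the Jacquet integral reduces it to the Fourier identity
\[
\int_{\Zp}\psi_p\big((b_\chi u_p-1)\xi p^{-h/2}\big)d\xi=\delta\big(u_p\in b_\chi^{-1}+p^{h/2}\Zp\big),
\]
giving the delta-function support (the sign in $\pm b_\chi^{-1}$ being a convention on $b_\chi$). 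Identifying the normalising constant $c$ with the local $\varepsilon$-factor $\varepsilon(1/2,\chi_1,\psi_p)$ via the analogous Jacquet integral at $g=1$ then rearranges the surviving powers of $p$ and residual character values into the stated form $\omega_{\chi,p}(-u_p)\psi(-u_p^{-1}p^{-h})p^{h/4-sh/2}$.

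The main obstacle is the precise bookkeeping of the $\varepsilon$-factor and the compatibility of sign conventions on $b_\chi$. A clean sanity check is furnished by the right $K_{1,p}(p^h)$-invariance $W_p(g_{-3h/2,h/2,u_p+p^{h/2}})=W_p(g_{-3h/2,h/2,u_p})$, which follows directly from the definition of $g_{t,l,v}$ and provides an independent verification that the final answer depends on $u_p$ only through its coset modulo $p^{h/2}$.
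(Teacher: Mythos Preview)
Your route via the Jacquet integral is genuinely different from the paper's argument, which instead expands $W_{\pi_p}(g_{t,l,v})$ in multiplicative characters $\mu$ of $\Zpx$, imports the Fourier coefficients $c_{t,l}(\mu)$ from \cite{As17}, and collapses the resulting sum by additive orthogonality after rewriting the Gauss sums via \cite[Lemma~2.37]{saha-large-values}. Your method is more self-contained and avoids those external inputs; the linearisation $\omega_{\chi,p}(1+p^{h/2}y)=\psi_p(b_\chi y p^{-h/2})$ plays the same role that the Gauss-sum identity plays in the paper.

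However, there is a genuine gap. The function $f_0$ you describe, with $f_0(bk)\propto\chi_1(a_k)\chi_2(d_k)$ on $BK_0(p^h)$, transforms on the right by $k\mapsto\omega_{\chi,p}(a_k)$; it is therefore the $K'_{1,p}(p^h)$-fixed vector, not the $K_{1,p}(p^h)$-fixed one. In the paper's conventions (see \S3.1), $W_p$ is the $K_{1,p}$-fixed Whittaker function, whose inducing datum is supported on $BwK_0(p^h)$ rather than $BK_0(p^h)$ when $\chi_1$ is ramified and $\chi_2$ is unramified. What your integral actually computes is the function the paper calls $W_{\pi_p}$, and the paper's first step is precisely the passage
\[
W_{p}(g_{-3h/2,h/2,u_p})=\varepsilon(\tfrac12,\pi_p)\,\omega_{\chi,p}(u_p)\,\psi_p(-u_p^{-1}p^{-h})\,W_{\pi_p}(g_{-3h/2,h/2,-u_p}),
\]
which is (a special case of) Lemma~\ref{lm:third_aL}. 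The factors $\omega_{\chi,p}(-u_p)\psi_p(-u_p^{-1}p^{-h})$ in the target formula come from \emph{this} relation, not from the normalising constant $c$. Indeed, your final ``rearranging'' step cannot work as stated: $c$ is independent of $u_p$, whereas on the support coset $u_p\in \pm b_\chi^{-1}+p^{h/2}\Zp$ one checks (writing $u_p=\pm b_\chi^{-1}(1+p^{h/2}t)$ and using your own linearisation) that $\omega_{\chi,p}(u_p)\psi_p(-u_p^{-1}p^{-h})$ still depends on $t$ through a factor $\psi_p(2b_\chi t p^{-h/2})$. The fix is either to redo the Jacquet integral with the correct new vector supported on $BwK_0(p^h)$, or to keep your computation as giving $W_{\pi_p}$ and then invoke the $W_p\leftrightarrow W_{\pi_p}$ relation explicitly.
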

\begin{proof}

For the proof of this lemma we will temporarily introduce some notation from \cite{As17, saha-large-values}. Indeed, let $W_{\pi_p}$ be the unique $K_{1,p}(N)'$-fixed vector normalised by $W_{\pi_p}(1)=1$. Using \cite[Lemma~2.18, Propositon~2.28]{saha-large-values} we find that  
\begin{equation}
	W_{p}(g_{-\frac{3h}{2},\frac{h}{2},u_p}) = \varepsilon({1}/{2},\pi_p)\omega_{\chi,p}(u_p)\psi(-u_p^{-1}p^{-h})W_{\pi_p}(g_{-\frac{3h}{2},\frac{h}{2},-u_p}). \nonumber
\end{equation}
Furthermore according to \cite[(11)]{saha-large-values} we expand
\begin{equation}
	W_{\pi_p}(g_{-\frac{3h}{2},\frac{h}{2},-u_p}) = \sum_{\substack{a(\mu)\leq \frac{h}{2},\\ \mu(p)=1}} c_{-\frac{3h}{2},\frac{h}{2}}(\mu)\mu(-u_p). \nonumber
\end{equation}
Fortunately the finite Fourier coefficients have been computed in \cite[Lemma~2.3]{As17}. With this at hand we obtain
\begin{equation}
	W_{\pi_p}(g_{-\frac{3h}{2},\frac{h}{2},-u_p}) = \varepsilon({1}/{2},\pi_p)\psi(-u_p^{-1}p^{-h})\frac{p^{\frac{sh}{2}-\frac{h}{4}}}{1-p^{-1}}\sum_{\substack{a(\mu)\leq \frac{h}{2},\\ \mu(p)=1}}[\omega_{\chi,p}\mu](u_p)\varepsilon({1}/{2},\mu^{-1}\omega_{\chi,p}^{-1}). \nonumber
\end{equation}
We now apply \cite[Lemma~2.37]{saha-large-values} to find $b_{\chi}\in\Zpx$ such that
\begin{align}
	W_{\pi_p}(g_{-\frac{3h}{2},\frac{h}{2},-u_p}) = \omega_{\chi,p}(-u_p)\psi(-u_p^{-1}p^{-h})\frac{p^{\frac{-sh}{2}-\frac{h}{4}}}{1-p^{-1}}\sum_{\substack{a(\mu)\leq \frac{h}{2},\\ \mu(p)=1}}\mu(-u_pb_{\chi}).\nonumber
\end{align}
The claim lemma now follows directly from orthogonality of characters.
\end{proof}

Together with Proposition \ref{prop:the_proposition}, Lemma \ref{lem:claim} implies
\begin{equation}
	a_f(n,{a}/{p^{\frac{h}{2}}}) = \begin{cases}
		\chi(a)p^{-\frac{sh}{2}} a_f(n)N^{-\frac{k-1}{4}} &\text{ if } (n,p)=1 \text{ and }n\equiv ab_{\chi} \mod p^{\frac{h}{2}},\\
		0 &\text{ else}
	\end{cases}
\end{equation}
remarking that $\abs{\chi(a)p^{-\frac{sh}{2}}}=1$. Furthermore, Corollary \ref{cor:ppprrp} gives the bound
\begin{equation}
	\sum_{0<n\leq X} \vert a_f(n;{a}/{p^{\frac{h}{2}}})\vert^2 \ll N^{\epsilon}k^2X^{k+\epsilon}\left(N^{-\frac{k}{2}}+X^{-\frac{1}{2}}\right). \nonumber
\end{equation}
Note that individually $\abs{a_f(n,{a}/{p^{\frac{h}{2}}})}^2 \ll_{\epsilon} n^{k-1+\epsilon}N^{-\frac{k}{2}+\frac{1}{2}}$. Thus for large $X$ the average bound above exploits the narrow support of the coefficients $a_f(n,{a}/{p^{\frac{h}{2}}})$.

\section{Generalised Atkin--Lehner relations}\label{sec:atkin_lehner}

In this section we will show how to relate the Fourier coefficients at different cusps. Classically, this corresponds to the Atkin--Lehner involution. From our point of view it will appear in terms of certain `functional equations' of the local Whittaker functions $W_p$. The upshot is that we get a large variety of relations between the cusps. Even though these relations become combinatorially quite involved and require a bit of notation we will maintain a description of the complete picture.

We start by providing the key identities. Indeed, this section is based on the following three lemmata. The first one is a local incarnation of the fact that cusps of the form ${a}/{q}$ with $N\mid q$ are equivalent to the cusp $\infty$. 

\begin{lem}\label{lm:second_AL}
Let $l \geq N_p$, then we have
\begin{equation}
	W_{p}(g_{t,l,v}) = \psi_p(-v^{-1}p^{t+l})\omega_{\chi,p}(-vp^{-l})p^{-\frac{t+2l}{2}}\lambda_{f}(p^{t+2l})\nonumber
\end{equation}
for all $v\in \Z_p^{\times}$ and all $t\in\Z$. 
\end{lem}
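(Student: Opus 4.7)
The plan is to put $g_{t,l,v}$ into a normal form (unipotent)$\cdot$(centre)$\cdot$(torus)$\cdot(K_{1,p}(N)$-element), and then invoke the three defining transformation laws of the new vector $W_p$: the Whittaker relation $W_p(n(x)g)=\psi_p(x)W_p(g)$, the central character relation $W_p(z(\lambda)g)=\omega_{\chi,p}(\lambda)W_p(g)$, and right $K_{1,p}(N)$-invariance $W_p(gk)=W_p(g)$.

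The first step is the direct matrix identity
$$wn(u)=n(-u^{-1})\,z(-u)\,a(u^{-2})\left(\begin{matrix}1 & 0\\ u^{-1} & 1\end{matrix}\right),$$
valid for any $u\in\Qpx$. Applying this with $u=vp^{-l}$, the hypothesis $l\geq N_p$ is exactly what forces $u^{-1}=p^{l}/v\in p^{N_p}\Z_p$, so the final factor sits in $K_{1,p}(N)$ (lower-left is in $N\Z_p$, diagonal is $(1,1)$). Substituting into $g_{t,l,v}=a(p^{t})wn(vp^{-l})$ and using the standard commutations $a(y)n(x)=n(xy)a(y)$ together with the centrality of $z$, the decomposition rearranges as
$$g_{t,l,v}=n(-p^{t+l}/v)\,z(-vp^{-l})\,a(p^{t+2l}/v^{2})\,k,\qquad k\in K_{1,p}(N).$$

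Applying $W_p$ and invoking the three transformation laws in turn pulls out the scalar factors $\psi_p(-v^{-1}p^{t+l})$ and $\omega_{\chi,p}(-vp^{-l})$, kills $k$, and reduces the problem to evaluating $W_p(a(p^{t+2l}/v^{2}))$. Since $v^{2}\in\Zpx$, I would write $a(p^{t+2l}/v^{2})=a(p^{t+2l})\,\diag(v^{-2},1)$; the right factor lies in $K_p(N)$ with bottom-right entry $1$, so its extended $\omega_{\chi,p}$-value is $1$ and $W_p(a(p^{t+2l}/v^{2}))=W_p(a(p^{t+2l}))$. The Shintani-type formula \eqref{eq:shintani} then gives $p^{-(t+2l)/2}\lambda_{f}(p^{t+2l})$ (the case $t+2l<0$ is handled by the convention $\lambda_{f}(p^{m})=0$ for $m<0$, in which case both sides of the claimed identity vanish), and assembling the factors yields the stated formula.

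There is no deep obstacle here: the argument is essentially bookkeeping in commutation relations. The single substantive point is the role of the hypothesis $l\geq N_p$, which is precisely what lets us absorb the lower-unipotent tail of the Bruhat decomposition into $K_{1,p}(N)$; any weaker hypothesis would leave behind a genuine $K_p$-element whose interaction with $W_p$ requires the finer local analysis of \cite{As17}.
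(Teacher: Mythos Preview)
Your proof is correct and essentially identical to the paper's. Both arguments amount to the same Iwasawa-type factorisation
\[
g_{t,l,v}=z(-vp^{-l})\,n(-v^{-1}p^{t+l})\,a(p^{t+2l})\left(\begin{matrix} v^{-2} & 0 \\ v^{-1}p^{l} & 1\end{matrix}\right),
\]
the only cosmetic difference being that you keep the $v^{-2}$ in the torus factor $a(p^{t+2l}/v^{2})$ and absorb $\diag(v^{-2},1)$ into $K_{1,p}(N)$ in a second step, whereas the paper writes the combined $K_{1,p}(N)$ element directly.
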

This is a reformulation of \cite[Lemma~2.3]{As18}. We include a shorter more instructive proof here.
\begin{proof}
Note that
\begin{equation}
\begin{array}{rcl}\vspace{0.1in}
	g_{t,l,v}=\left(\begin{matrix} 0&p^t \\ -1 & -vp^{-l}	\end{matrix}\right) &=& \left(\begin{matrix} p^{t+l} & -v^{-1}p^t \\ 0 & p^{-l}	\end{matrix}\right) \left(\begin{matrix} -v^{-1} &  0 \\ -p^{l} & -v	\end{matrix}\right) \\ &=& z(-vp^{-l})n(-v^{-1}p^{t+l})a(p^{t+2l}) \left(\begin{matrix} v^{-2} &  0 \\ v^{-1}p^{l} & 1	\end{matrix}\right). \nonumber
\end{array}
\end{equation}
Since $l\geq N_p$, the final matrix above is in $K_{1,p}(N)$. Thus, exploiting the transformation behaviour of $W_{p}$ and \eqref{eq:shintani} yields the result at once.
\end{proof}

The next result is a local take on switching the cusps $0$ and $\infty$ using the classical Atkin--Lehner involution. 
\begin{lem} \label{lm:first_atkin_lehner}  
For all $v\in\Z_p^{\times}$ we have
\begin{equation}
	W_{p}(g_{t,0,v}) = 	\varepsilon({1}/{2},\pi_p)p^{-\frac{t+N_p}{2}}\lambda_{\tilde{\pi}}(p^{t+N_p}),  \nonumber	
\end{equation}
\end{lem}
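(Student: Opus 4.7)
The hypothesis $l=0$ with $v\in\Zpx$ is important: the matrix $n(v)=\smat{1}{v}{0}{1}$ lies in $K_{1,p}(N)$, since its $(2,1)$-entry is $0\in p^{N_p}\Zp$ and its $(2,2)$-entry is $1\in 1+p^{N_p}\Zp$. Hence the right $K_{1,p}(N)$-invariance that characterises the local new vector $W_p$ immediately collapses $W_p(g_{t,0,v})=W_p(a(p^t)w)$, so that $v$ drops out --- consistent with the $v$-independence of the right-hand side.

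My plan is then to exploit the clean matrix identity $a(p^t)w = a(p^{t+N_p})\cdot w_{N_p}^{-1}$ where $w_{N_p}:=\smat{0}{-1}{p^{N_p}}{0}$ is the local Atkin--Lehner element of level $p^{N_p}$; this follows from the direct calculation $w_{N_p}^{-1}=\smat{0}{p^{-N_p}}{-1}{0}=a(p^{-N_p})w$. The point of this rewriting is that it isolates a pure diagonal $a(p^{t+N_p})$ on the left, with the Atkin--Lehner twist $w_{N_p}^{-1}$ pushed to the right, putting us exactly into the regime of a local Atkin--Lehner functional equation.

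The third step is to invoke the local Atkin--Lehner relation for the Whittaker new vector in the form given in \cite[Lem.~2.18, Prop.~2.28]{saha-large-values} (the same references already used in the proof of Lemma \ref{lem:claim}); this relation converts $W_p(g\,w_{N_p}^{-1})$ into $\varepsilon(1/2,\pi_p)$ times the normalised new vector $\widetilde{W}_p\in \Wh(\tilde\pi_p,\psi_p)$ of the contragredient, evaluated at $g=a(p^{t+N_p})$, possibly with a central-character twist that one expects to be absorbed into $\widetilde{W}_p$ via $\tilde\pi_p\cong(\omega_{\chi,p}^{-1}\circ\det)\otimes\pi_p$. Finally, applying the Shintani-type identity \eqref{eq:shintani} (valid at all finite primes) to $\widetilde{W}_p$ yields $\widetilde{W}_p(a(p^{t+N_p}))=p^{-(t+N_p)/2}\lambda_{\tilde\pi}(p^{t+N_p})$, producing the desired formula.

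The only real obstacle I anticipate is bookkeeping: one must verify that the central-character factor produced by the Atkin--Lehner relation in Step~3 is exactly the one already baked into the definition of $\lambda_{\tilde\pi}$, so that no residual twist survives. Concretely, this requires matching the normalisation conventions of \cite{saha-large-values} for the new vector in $\tilde\pi_p$ (and, in the case where $p\mid M$ and $\omega_{\chi,p}$ is ramified, ensuring that the would-be factor $\omega_{\chi,p}(p^{t+N_p})$ is never needed in isolation). Once this is checked, the lemma follows without further computation.
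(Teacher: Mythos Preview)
Your route is genuinely different from the paper's. The paper works directly with the zeta integral $Z_p(\pi_p(w)W_p,s,\mu)$, applies the Jacquet--Langlands local functional equation to express it as $\varepsilon(\tfrac12-is,\pi_p)L(\tfrac12+is,\tilde\pi_p)$ (up to the character-support condition), and then recovers $W_p(a(p^t)w)$ by $p$-adic Mellin inversion. Your approach instead factors $a(p^t)w=a(p^{t+N_p})w_{N_p}^{-1}$ and appeals to the local Atkin--Lehner relation to land on $\tilde W_p(a(p^{t+N_p}))$, which you then evaluate by \eqref{eq:shintani}. Both ultimately rest on the local functional equation --- yours through the identification of the Atkin--Lehner pseudo-eigenvalue in \cite{saha-large-values}, the paper's through the zeta-integral calculus --- but the paper's argument is self-contained and is logically prior in this article: Lemma~\ref{lm:third_aL} is \emph{deduced from} Lemma~\ref{lm:first_atkin_lehner} (the value $W_p(w_{N_p})=\omega_{\chi,p}(p^{N_p})\varepsilon(\tfrac12,\pi_p)$ is exactly the case $t=-N_p$ of the lemma you are proving). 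So within the paper your derivation would be circular; externally it is fine provided \cite{saha-large-values} establishes the pseudo-eigenvalue independently.

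On the bookkeeping you flag: it is not merely cosmetic. Running your argument through the Atkin--Lehner relation in the form proved here (Lemma~\ref{lm:third_aL}) and then Lemma~\ref{lm:second_AL} for $\tilde W_p$ produces an extra factor $\omega_{\chi,p}(p^{t+N_p})$ in front of $\lambda_{\tilde\pi}(p^{t+N_p})$. This is harmless under the standing hypothesis $\omega_{\chi,p}(p)=1$ of \cite[Prop.~2.28]{saha-large-values}, but in general (e.g.\ $\pi_p=\mu_1\boxplus\mu_2$ with $\mu_1$ unramified) it does not vanish and is not automatically absorbed by the isomorphism $\tilde\pi_p\cong(\omega_{\chi,p}^{-1}\circ\det)\otimes\pi_p$. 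You would need to carry out the unramified-twist reduction explicitly (as the paper does when passing from \cite{saha-large-values} to Lemma~\ref{lm:third_aL}) rather than leave it as an expectation. The paper's zeta-integral proof sidesteps this entirely because the functional equation already packages the twist correctly.
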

The result can be derived from \cite[\S 2.2.2]{As18} together with \cite[Corollary 2.26]{saha-large-values}. Let us give a slightly more direct proof here.
\begin{proof}
We put $W'=\pi_{p}(w)W_{p}$ and define
\begin{equation}
	Z_p(W',s,\mu) = \int_{\Q_p^{\times}}W'(a(y))\mu(y)\abs{y}_p^{s-\frac{1}{2}}d^{\times}y.
\end{equation} 
The local functional equation for this zeta integral is given by
\begin{equation}
	Z_p(W', {1}/{2}+it,\mu) = \frac{L(\frac{1}{2}+it,\mu\pi_p)Z_p(\pi_p(w)W',\frac{1}{2}-it,\mu^{-1}\omega_{\chi,p}^{-1})}{\varepsilon({1}/{2}+it,\mu\pi_p)L({1}/{2}-it,\mu^{-1}\tilde{\pi}_p)}. \nonumber
\end{equation} 
We can rewrite this as
\begin{equation}
\begin{array}{rc>{\ds}l}\vspace{0.1in}
	Z_p(W',{1}/{2}+is,\mu) &=&\frac{L(\frac{1}{2}+is,\mu\pi_p)Z_p(\pi_p(z(-1))W_{p},{1}/{2}-is,\mu^{-1}\omega_{\chi,p}^{-1})}{\varepsilon({1}/{2}+is,\mu\pi_p)L({1}/{2}-is,\mu^{-1}\tilde{\pi}_p)} \nonumber \\
	&=& \delta(\mu\vert_{\Zpx}=\omega_{\chi,p}\vert_{\Zpx}) \varepsilon({1}/{2}-is,\pi_{p})L({1}/{2}+is,\tilde{\pi}_{p}). \nonumber
\end{array}
\end{equation}
By $p$-adic Mellin inversion we find
\begin{align}
	W_{p}(g_{t,0,v})  = W'(a(p^t)) &= \frac{\log(p)}{2\pi}\sum_{\mu}\int_{-\frac{\pi}{\log(p)}}^\frac{\pi}{\log(p)}Z(W',\frac{1}{2}+is,\mu)p^{ist}ds\nonumber \\
	&= \varepsilon({1}/{2},\pi_{p})\frac{\log(p)}{2\pi}\int_{-\frac{\pi}{\log(p)}}^\frac{\pi}{\log(p)}L(\frac{1}{2}+is,\tilde{\pi}_{p})p^{is(t+N_p)}ds. \nonumber
\end{align}
The statement follows after evaluating the remaining archimedean integral.
\end{proof}

The final lemma lies at the heart of the generalised Atkin--Lehner involution. 
\begin{lem} \label{lm:third_aL}
Let $0\leq l\leq v_p(N)$. Then we have
\begin{equation}
	W_{p}(g_{t,l,v})= \varepsilon({1}/{2},\pi_p)\psi_p(-p^{t+l}v^{-1})\omega_{\chi,p}(vp^{t+l})\tilde{W}_{p}(g_{t+2l-N_p,N_p-l,-v}). \nonumber
\end{equation}
Here $\tilde{W}_p$ is the normalised new vector in $\tilde{\pi}_p$.
\end{lem}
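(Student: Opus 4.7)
The strategy is to exploit the local Atkin--Lehner involution at the place $p$, which intertwines the normalised new vectors of $\pi_p$ and $\tilde\pi_p$ up to the scalar $\varepsilon(1/2,\pi_p)$. The key input -- the same input that is implicit in the proof of Lemma~\ref{lm:first_atkin_lehner} -- is the relation
\begin{equation*}
	W_p(g\,\eta_p) \;=\; \varepsilon(1/2,\pi_p)\,\omega_{\chi,p}^{-1}(\det g)\,\tilde W_p(g),
\end{equation*}
where $\eta_p := \left(\begin{smallmatrix} 0 & 1 \\ -p^{N_p} & 0 \end{smallmatrix}\right)$ is the local Atkin--Lehner element. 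This identity is the clean packaging of the local Jacquet--Langlands functional equation on the $K_{1,p}(N)$-fixed line, and can be derived by the same Mellin inversion used in Lemma~\ref{lm:first_atkin_lehner}, or extracted from \cite[\S 2.2.2]{As18} (cf.\ also \cite[Corollary 2.26]{saha-large-values}).

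With this in hand, the plan is a two-step matrix manipulation. In the first step, I would perform a Bruhat-style decomposition of $g_{t,l,v}$ isolating the $\eta_p$ factor; the hypothesis $0\le l\le N_p$ is exactly what is needed for the decomposition to close cleanly. Concretely, the goal is an identity of the shape
\begin{equation*}
	g_{t,l,v} \;=\; n(u)\,z(c)\,g_{t+2l-N_p,\,N_p-l,\,-v}\,\eta_p\,k
\end{equation*}
with $k \in K_{1,p}(N)$, and central/unipotent parameters $c \in \Qpx$, $u \in \Qp$ chosen so that after applying the Whittaker transformation law $W_p(n(u)z(c)h) = \psi_p(u)\omega_{\chi,p}(c)W_p(h)$ the resulting scalar is exactly $\psi_p(-v^{-1}p^{t+l})\omega_{\chi,p}(vp^{t+l})$. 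In the second step, I would apply the Atkin--Lehner identity displayed above to the factor $W_p(g_{t+2l-N_p,N_p-l,-v}\,\eta_p)$, converting $W_p$ into $\tilde W_p$ at the cost of $\varepsilon(1/2,\pi_p)$, and tracking the central character $\omega_{\chi,p}^{-1}(\det g_{t+2l-N_p,N_p-l,-v})$ (which is straightforward since the determinant of a $g_{t',l',v'}$ is a fixed power of $p$). Combining the two steps yields the claimed formula.

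The main obstacle will be carrying out the matrix decomposition of step one uniformly across the entire range $0\le l\le N_p$: in the ``big-cell'' range $0<l<N_p$ the element $n(vp^{-l})$ lies in neither $K_p$ nor any obvious Borel-complement, and one has to genuinely invoke Bruhat together with the freedom in choosing $c$ and $u$ so as to absorb the residue into $K_{1,p}(N)$. A useful safety net is that the formula is an involution on triples $(t,l,v)$: applying Lemma~\ref{lm:third_aL} twice must return $(t,l,v)$ unchanged, and the scalar factors then multiply out to $1$ via the standard epsilon identity $\varepsilon(1/2,\pi_p)\varepsilon(1/2,\tilde\pi_p) = \omega_{\chi,p}(-1)$. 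Further consistency checks come from the boundary cases $l = 0$ (which should recover Lemma~\ref{lm:first_atkin_lehner}) and $l = N_p$ (which, combined with Lemma~\ref{lm:second_AL} applied to $\tilde\pi_p$, provides the complementary direction).
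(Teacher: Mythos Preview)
Your proposal is correct and is essentially the paper's own argument, repackaged. The paper's $w_{N_p}:=z(p^{N_p})a(p^{-N_p})w$ is exactly your $\eta_p$, and the matrix decomposition you seek is precisely the single explicit identity
\[
	g_{t,l,v} = n(-v^{-1}p^{l+t})\,z(vp^{-l})\,g_{t+2l-N_p,\,N_p-l,\,-v}\left(\begin{smallmatrix}1 & 0 \\ 0 & v^{-2}\end{smallmatrix}\right)w_{N_p},
\]
valid uniformly for $0\le l\le N_p$ without any case split (so your anticipated ``main obstacle'' does not arise). The only cosmetic difference is that the paper passes through the intermediate $K_{1,p}'(N)$-fixed vector $W_p'$ and then identifies $W_p'=(\omega_{\chi,p}\circ\det)\cdot\tilde W_p$, whereas you take the composite Atkin--Lehner identity $W_p(g\eta_p)=\varepsilon(1/2,\pi_p)\omega_{\chi,p}^{-1}(\det g)\tilde W_p(g)$ as a single input; these are equivalent formulations of the same step.
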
 
This is exactly \cite[Proposition~2.28]{saha-large-values} after accounting for an unramified twist to remove the assumption $\omega_{\chi,p}(p)=1$. To avoid shuffling around the unramified twist we give a self-contained proof.
\begin{proof}
Let $W_{p}'$ be the unique $K_{1,p}'(N)$-fixed vector normalised by $W_{p}'=1$. We write $$w_{N_p}:=z(p^{N_p})a(p^{-N_p})w$$ and check that, according to Lemma~\ref{lm:first_atkin_lehner}, we have 
\begin{equation}
	W_{p}(w_{N_p}) = \omega_{\chi,p}(p^{N_p})\varepsilon({1}/{2},\pi_{p}). \nonumber
\end{equation}
In particular, after checking the right transformation behaviour, we find that $$\pi_p(w_n)W_{p} = \omega_{\chi,p}(p^{N_p})\varepsilon({1}/{2},\pi_{p})W_{p}'.$$ For $0\leq l\leq N_p$ we have the matrix identity
\begin{equation}
	g_{t,l,v} = n(-v^{-1}p^{l+t})z(vp^{-l})g_{t+2l-N_p,N_p-l,-v}\left(\begin{matrix}1 & 0 \\ 0 & v^{-2}
	\end{matrix}\right)w_{N_p}. \nonumber
\end{equation}
We compute
\begin{equation}
		W_{p}(g_{t,l,v}) = \varepsilon({1}/{2},\pi_{p})\omega_{\chi,p}(vp^{N_p-l})\psi_p(-v^{-1}p^{t+l})W_{p}'(g_{t+2l-N_p,N_p-l,-v}).\nonumber
\end{equation}

To conclude the proof we must identify $W_{p}'$ in terms of $\tilde{W}_{p}$. But given the isomorphism
\begin{equation}
\Psi\colon\Wh(\pi_p,\psi_p) \to \Wh(\tilde{\pi}_p,\psi_p), \quad W\mapsto [\omega_{\chi,p}^{-1}\circ\det]\cdot W \nonumber 
\end{equation}
and since
\begin{equation}
	\Psi(W_{p}') \vert_{K_{1,p}} = 1, \nonumber
\end{equation}
the uniqueness of the new vector in $\tilde{\pi}_p$ implies that
\begin{equation}
	\omega_{\chi,p}(\det(g))^{-1}W_{p}'(g) = \tilde{W}_{p}(g). \nonumber
\end{equation}
\end{proof}

We now piece together these local results. Given a cusp ${a}/{q}$, we can flip the sign of $v_p(q)$ according to the lemmata above at any place $p\mid N$. Let us moreover assume the cusp to be in standard form; that is, $q\mid N$. For a set $S\subset \{ p \mid N \}$ define
\begin{equation} 
	q^S:= \prod_{p\in S} p^{N_p-q_p} \prod_{\substack{p \mid N,\\ p\nmid S}} p^{q_p}\quad \text{and}\quad M^S :=  \prod_{p\in S}p^{M_p}.
\end{equation}
Factorise the global character $\omega_{\chi} = \prod_{p\mid M} \omega_{\chi}^{(p)}$. Here, the $ \omega_{\chi}^{(p)}$ are Hecke characters of conductor $p^{M_p}$, so that their $p$th-component restricted to $\Zpx$ equals the $p$th-component of $\omega_{\chi}$ restricted to $\Zpx$ and $\omega_{\chi,p}^{(p)}(p) = 1$. (This is an elaborate way of stating that, if $\chi$ factors into a product of primitive Dirichlet characters $\chi^{(p)}$ of conductor $p^{v_p(M)}$, then $\omega_{\chi}^{(p)}$ is the adelisation of $\chi^{(p)}$.) With this at hand we define
\begin{equation}
	\omega_{\chi}^S = \prod_{p\mid M^S}  [\omega_{\chi}^{(p)}]^{-1}\quad \text{and}\quad \pi^{S} = \omega_{\chi}^S  \pi. \nonumber
\end{equation}
The idea above is to replace $\pi_p$ (up to unramified twist) with $\tilde{\pi}_p$ at each place $p\mid M^S$. Note that the conductor of $\omega_{\pi}^S$ is $M^S$, while the conductor of $\pi^S$ remains $N$. Finally, let $f^S$ be the newform associated to $\pi^S$. This turns out to be the correct (generalised) Atkin--Lehner partner of $f$ for given $S$. The generalised (pseudo)-Atkin--Lehner eigenvalue will turn out to be related to
\begin{equation}
	\eta_{\af}(f,S) := \prod_{p \in S } \varepsilon({1}/{2},\pi_p)\omega_{\chi,p}(-a). \nonumber
\end{equation}
We are now ready to establish the following proposition.

\begin{prop}
Let $S$ be as above. Then we have
\begin{equation}
	a_f(n;\af) = \eta_{\af}(f,S)\left[ \prod_{p \in S }\psi_p\left( \frac{a^Sn}{q^S\delta(\af^S)} \right)\omega_{\chi,p}^{-1}(q^S) \right] \left(\prod_{p \in S } p^{q_p^S-q_p}\right)^{\frac{k}{2}} a_{f^S}(n,\af^S). \nonumber
\end{equation}
The cusp $\af^S = {a^S}/{q^S}$, where $a^S$ is uniquely determined modulo $\delta(\af)q = \delta(\af^S)q^S$ defined by
\begin{equation}
	a^S \equiv \begin{cases} 
		a \mod p^{d_{\pi_p}(q_p)-q_p} &\text{ if $p\mid N$ and $p\not\in S$},\\
		-a \mod p^{d_{\pi_p}(q_p^S)-q_p^S} &\text{ if $p\in S$.}
	\end{cases}	\nonumber
\end{equation}
\end{prop}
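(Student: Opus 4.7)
The plan is to start from the explicit product formula in Proposition~\ref{prop:the_proposition} and apply the local ``functional equation'' of Lemma~\ref{lm:third_aL} at each prime $p\in S$ to reassemble what will turn out to be the analogous product formula for $a_{f^S}(n;\af^S)$.

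\smallskip

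Concretely, first I would write
\begin{equation*}
\dfrac{a_{f}(n;\af)}{n^{k/2}}=\Omega_{\chi,q,\delta(\af)}\dfrac{a_{f}(n_{0})}{n_{0}^{k/2}}\,e\!\left(\dfrac{nd\,\overline{q/(q,N^{\infty})}}{\delta(\af)(q,N^{\infty})}\right)\dfrac{\prod_{p\mid N}\omega_{\chi,p}(n_0)W_{p}(g_{n_{p}-d_{\pi_{p}}(q_{p}),q_{p},u_{p}})}{\delta(\af)^{k/2}},
\end{equation*}
and then, for each $p\in S$, apply Lemma~\ref{lm:third_aL} with $t=n_p-d_{\pi_p}(q_p)$, $l=q_p$ and $v=u_p$. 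The new matrix index is
$g_{t+2l-N_p,\,N_p-l,\,-u_p}$. Using the definitions of $d_{\pi_p}$ and $q_p^S=N_p-q_p$, I would verify the two key numerical identities
\[
N_p-l\,=\,q_p^{S},\qquad
d_{\pi_p}(q_p)-2q_p+N_p\,=\,d_{\pi_p^S}(q_p^S),
\]
the second of which follows from $\max\{2q_p,M_p+q_p,N_p\}-2q_p+N_p=\max\{N_p,M_p+q_p^S,2q_p^S\}$ since the conductor of $\pi^S$ at $p$ is still $N_p$ and $\omega_\chi^S|_{\Zpx}=\omega_\chi^{-1}|_{\Zpx}$ has the same conductor $M_p$ at $p\in S$.

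\smallskip

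Second, I identify the resulting $\tilde{W}_p$ with the new-vector Whittaker function $W_p^{f^S}$ attached to $\pi^S$. Since $\tilde{\pi}_p\cong\omega_{\chi,p}^{-1}\pi_p$ and $\pi_p^S=[\omega_{\chi,p}^{(p)}]^{-1}\pi_p$, the two differ by the unramified character $\omega_{\chi,p}[\omega_{\chi,p}^{(p)}]^{-1}$, whose value on $\det$ gets absorbed into the $\omega_{\chi,p}^{-1}(q^S)$ factor in the statement; at places $p\in S$ with $p\nmid M$ this twist is trivial. I must then verify that the new argument $g_{n_p-d_{\pi_p^S}(q_p^S),q_p^S,-u_p}$ is exactly of the form prescribed by Proposition~\ref{prop:the_proposition} applied to $f^S$ and $\af^S$: using $q\delta(\af)=q^S\delta(\af^S)$ (checked prime-by-prime from the equality of exponents above) and the fact that $p^{d_{\pi_p}(q_p)-q_p}/(q\delta(\af))\in\Z_p^\times$, the unit obtained from $-u_p$ matches $u_p^S$ exactly when
\[
a^S\equiv -a\pmod{p^{d_{\pi_p}(q_p)-q_p}}\quad\text{for $p\in S$},
\]
which is precisely the congruence in the proposition (at $p\notin S$, $u_p^S=u_p$ automatically forces $a^S\equiv a$ to the corresponding precision).

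\smallskip

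The third step is bookkeeping of phases. The $\varepsilon(1/2,\pi_p)$ factors from Lemma~\ref{lm:third_aL} (one per $p\in S$) combine with the $\omega_{\chi,p}(-a)$ pieces arising from $\omega_{\chi,p}(u_p p^{t+l})$ (after cancelling the unit part coming from $q\delta(\af)/p^{d_{\pi_p}(q_p)-q_p}$) to produce $\eta_\af(f,S)$. Using $u_p^{-1}\cdot p^{t+l}=-a^{-1}n/(q\delta(\af))$, the $\psi_p(-p^{t+l}u_p^{-1})$ terms combine with the global $e(\cdot)$ prefactor into the product $\prod_{p\in S}\psi_p(a^S n/(q^S\delta(\af^S)))$, where the sign flip $a\mapsto -a^S\equiv a^{-1}$ comes from the defining congruence of $a^S$ at primes $p\in S$ (and leaves the primes $p\notin S$ contribution of the original $e(\cdot)$-factor intact as it reassembles into the analogous $e(\cdot)$-factor for $\af^S$). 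Lastly, the weight shift $\bigl(\prod_{p\in S}p^{q_p^S-q_p}\bigr)^{k/2}$ arises from comparing the normalising denominators $\delta(\af)^{k/2}$ and $\delta(\af^S)^{k/2}$ in the two applications of Proposition~\ref{prop:the_proposition}.

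\smallskip

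The main obstacle I expect is the phase and twist bookkeeping in the previous paragraph: one must carefully distinguish, at each $p\in S$, the cases $p\mid M$ and $p\nmid M$ (the unramified twist $\omega_{\chi,p}[\omega_{\chi,p}^{(p)}]^{-1}$ is trivial only in the first case), and verify that the products over $S$ collapse exactly onto $\eta_\af(f,S)$, $\prod_{p\in S}\omega_{\chi,p}^{-1}(q^S)$, and the modified $\psi_p$-phase in the claimed formula. All other ingredients are consequences of Lemma~\ref{lm:third_aL} and of elementary manipulations with the definitions of $q^S$, $\af^S$, and $d_{\pi_p}$.
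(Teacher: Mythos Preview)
Your strategy is exactly the one the paper uses: start from Proposition~\ref{prop:the_proposition}, apply Lemma~\ref{lm:third_aL} at each $p\in S$, and then reassemble via Proposition~\ref{prop:the_proposition} for $f^S$ at $\af^S$. The numerical identities you check (in particular $d_{\pi_p}(q_p)-q_p=d_{\pi_p^S}(q_p^S)-q_p^S$, equivalently $q\delta(\af)=q^S\delta(\af^S)$) and the matching of $u_p^S$ with $-u_p$ via the congruences on $a^S$ are the right ingredients.

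There is, however, one concrete inaccuracy in your bookkeeping that would cause the reassembly to fail as written. You assert $\pi_p^S=[\omega_{\chi,p}^{(p)}]^{-1}\pi_p$, but this is not correct: $\omega_\chi^S=\prod_{l\in S,\,l\mid M}[\omega_\chi^{(l)}]^{-1}$ is a \emph{global} Hecke character, so its $p$-component picks up unramified contributions from every $l\in S$, not just $l=p$. Concretely, the paper must record
\[
W_p^S(g_{\bullet})=\tilde{W}_p(g_{\bullet})\prod_{l\in S}\omega_{\chi,l}\bigl(p^{n_p-d_{\pi_p}(q_p^S)}\bigr)\quad(p\in S),\qquad
W_p^S(g_{\bullet})=W_p(g_{\bullet})\prod_{l\in S}\omega_{\chi,l}\bigl(p^{n_p-d_{\pi_p}(q_p)}\bigr)\quad(p\notin S,\ p\mid N),
\]
and, at the unramified places, $a_{f^S}(n_0)=a_f(n_0)\prod_{p\in S}\omega_{\chi,p}(n_0)$. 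Your outline never touches the places $p\mid N$, $p\notin S$, nor the change $a_f(n_0)\mapsto a_{f^S}(n_0)$; without these cross-terms the collection of $\omega_{\chi,\bullet}$-factors will not collapse to $\eta_\af(f,S)$ and $\prod_{p\in S}\omega_{\chi,p}^{-1}(q^S)$ as claimed. You correctly anticipate that the twist bookkeeping is the delicate point; the fix is precisely to track the effect of the global twist $\omega_\chi^S$ at \emph{all} places $p\mid N$ and at $n_0$, as above.
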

\begin{proof}
We start with Proposition~\ref{prop:the_proposition} and inspect the product of local Whittaker functions piece by piece. At  each place $p\in S$ we apply Lemma~\ref{lm:third_aL} and find
\begin{multline}
	W_{p}(g_{n_p-d_{\pi_p}(q_p),q_p,u_p}) = \varepsilon({1}/{2},\pi_p)\psi_p\left( \frac{a^{-1}n q}{[q^2,qM,N]}\right)\omega_{\chi,p}\left(u_pp^{n_p-d_{\pi_p}(q_p)+q_p}\right) \\ \times\, \tilde{W}_{p}(g_{n_p-d_{\pi_p}(N_p-q_p),N_p-q_p,-u_p}). \nonumber
\end{multline}
At the remaining places, $p\not\in S$, we leave the value of $W_p$ as it is. Note that $q_p-\max(2q_p,M_p+q_p, N_p)$ is invariant under changing $q_p$ to $N_p-q_p$. We arrive at
\begin{align} 
	a_f(n;\af) =& \eta_{\af}(f,S)\Omega_{\chi,q,\delta(\af)} a_f(n_0) \left(\frac{n_1}{\delta(\af)}\right)^{\frac{k}{2}} 
	\prod_{p\notin S}\psi_p\left( -\frac{a^{-1}n q}{[q^2,qM,N]}\right) \nonumber \\
	&\times\, \prod_{p \notin S, p\mid N }\omega_{\chi,p}\left(\frac{n}{p^{n_p}}\right)W_p(g_{n_p-d_{\pi_p}(q_p),q_p,u_p}) \nonumber \\
	&\times\, \prod_{p \in S }\omega_{\chi,p}\left(\frac{q\delta(\af)}{p^{2d_{\pi}(q_p)-2q_p-n_p}}\right)\tilde{W}_p(g_{n_p-d_{\pi_p}(N_p-q_p),N_p-q_p,-u_p}).\nonumber
\end{align}
We make two observations. Firstly, $\delta(\af)^{-1} = \frac{q}{q^S\delta(\af^S)}$. Secondly,
\begin{equation}
 a_f(n_0)	\prod_{p \in S }\omega_{\chi,p}(n_0)=a_{f^S}(n_0). \nonumber
\end{equation}
Finally, writing $W^S_p$ for the $p$-Whittaker new vector of $\pi^S$, we have
\begin{equation}
	W_p^S(g_{n_p-d_{\pi_p}(q_p),q_p,u_p}) = W_p(g_{n_p-d_{\pi_p}(q_p),q_p,u_p})  \prod_{l\in S}\omega_{\chi,l}(p^{n_p-d_{\pi_p}(q_p)}) \nonumber
\end{equation}
for $p\mid N$ and $p\notin S$. On the other hand, if $p\in S$, then
\begin{multline}
	W_p^S(g_{n_p-d_{\pi_p}(N_p-q_p),N_p-q_p,-u_p}) = \\\tilde{W}_p(g_{n_p-d_{\pi_p}(N_p-q_p),N_p-q_p,-u_p}) \prod_{l\in S}\omega_{\chi,l}(p^{n_p-d_{\pi_p}(N_p-q_p)}). \nonumber
\end{multline}
Summing up what we have observed so far,
\begin{align}
	a_f(n;\af) =& \eta_{\af}(f,S)\Omega_{\chi,q,\delta(\af)}\left(\frac{q}{q^S}\right)^{\frac{k}{2}} \prod_{p \in S }\omega_{\chi,p}\left(\frac{q^2q^S\delta(\af)^2}{p^{2d_{\pi_p}(q_p)-2q_p}} \right)   \prod_{p\notin S}\psi_p\left( -\frac{a^{-1}n q}{[q^2,qM,N]}\right) \nonumber \\
		&\times\, a_{f^S}(n_0) \left(\frac{n_1}{\delta(\af^S)}\right)^{\frac{k}{2}} \prod_{p\mid N }[\omega_{\chi,p}(\omega_{\chi,p}^S)^2]\left(\frac{n}{p^{n_p}}\right)W_p^S(g_{n_p-d_{\pi_p}(q_p^S),q_p^S,u_p^S}). \nonumber 
\end{align}
Finally, we have
\begin{equation}
	\varepsilon_q\varepsilon_{q^S}^{-1} = \prod_{p \in S }\omega_{\chi,p}^{-1}\left(\frac{q^2(q^S)^2\delta(\af)^2}{p^{2d_{\pi_p}(q_p)-2q_p}} \right).\nonumber
\end{equation}
Thus, using Proposition~\ref{prop:the_proposition} for $a_{f^S}(n;\af^S)$, yields the desired result.
\end{proof}

\section{The Vorono\"i summation formulae}\label{sec:voronoi}

We now discuss some variants of the Vorono\"i summation formula for $\GL_{2}$. Our starting point is the following pre-Vorono\"i formula.

\begin{lem} \label{lm:basic_whitt_id}
Let $\zeta\in \A$ and let $\phi$ be a cuspidal automorphic form in the representation space of $\pi$. Then we have
\begin{equation}
	\sum_{\xi\in\Q^{\times}} \psi(\xi\zeta)W_{\phi}\left(\left(\begin{matrix} \xi&0 \\ 0&1 \end{matrix}\right)\right) = \sum_{\xi\in \Q^{\times}}\hat{W}_{\phi}\left(\left(\begin{matrix} \xi&0 \\ 0&1 \end{matrix}\right)\left(\begin{matrix} 1&0 \\-\zeta&1 \end{matrix}\right)\right) \label{eq:basic_voronoi}
\end{equation}
where $\hat{W}_{\phi}(g) := W_{\phi}(w{}^tg^{-1})$.
\end{lem}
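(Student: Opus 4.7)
The plan is to reduce both sides of \eqref{eq:basic_voronoi} to the single adelic value $\phi(n(\zeta))$, so that the lemma becomes a direct consequence of the Whittaker--Fourier expansion \eqref{eq:whittaker_expn} together with the left $\GL_2(\Q)$-invariance of $\phi$ at the Weyl element $w$.

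To pin down the LHS I would push the unipotent through $a(\xi)$ using $a(\xi)n(\zeta) = n(\xi\zeta)a(\xi)$ and the Whittaker transformation $W_\phi(n(x)g) = \psi(x)W_\phi(g)$. This turns the LHS into $\sum_{\xi \in \Q^\times} W_\phi(a(\xi)n(\zeta))$, which \eqref{eq:whittaker_expn} (applied with $g = n(\zeta)$) identifies as $\phi(n(\zeta))$.

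For the RHS I would first use the left $G(\Q)$-invariance of $\phi$ to replace $\phi(n(\zeta))$ by $\phi(wn(\zeta))$, and then apply \eqref{eq:whittaker_expn} with $wn(\zeta)$ in place of $g$. A short calculation yields $a(\xi)wn(\zeta) = \smat{0}{\xi}{-1}{-\zeta}$, while an equally short computation of $w \cdot {}^t(a(\xi)\smat{1}{0}{-\zeta}{1})^{-1}$ yields $\frac{1}{\xi}\smat{0}{\xi}{-1}{-\zeta}$; the two matrices differ by the central element $z(\xi)$. Since the central character $\omega_\pi$ of $\pi$ is a Hecke character and hence trivial on $\Q^\times$, this scalar is absorbed to give $W_\phi(a(\xi)wn(\zeta)) = \hat W_\phi(a(\xi)\smat{1}{0}{-\zeta}{1})$, and summing over $\xi \in \Q^\times$ produces the RHS, again equal to $\phi(n(\zeta))$.

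Conceptually the identity encodes two Whittaker expansions of $\phi(n(\zeta))$, one coming from the stabiliser of $\infty$ and the other from the stabiliser of $0$; this is precisely the strong Gelfand formation picture sketched in the introduction, with walls $N$ and $wNw^{-1}$. The argument is largely bookkeeping; the only mildly delicate point is the matrix identity above together with the invocation of $\omega_\pi|_{\Q^\times} = 1$ to absorb the scalar discrepancy, and no non-trivial analytic input is required.
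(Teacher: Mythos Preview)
Your argument is correct and is precisely the proof the paper has in mind: the paper does not spell out a proof of this lemma at all, but the remark in the introduction about the strong Gelfand formation with walls $N$ and $wNw^{-1}$ is exactly the conceptual summary you give at the end. The matrix check and the use of $\omega_{\pi}\vert_{\Q^{\times}}=1$ to kill the central $z(\xi^{-1})$ are both right, so nothing is missing.
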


The trick used alongside the pre-Voron\"i summation formula is to choose the cuspidal function in various ways $\phi$ to determine a variety of Vorono\"i summation formulae.
The following one is our take on a commonly used classical formula. This previously appeared in \cite[Lemma~2.1]{As18} and is based on \cite[Theorem~3.1]{Te_vor}.

\begin{theorem} \label{th:voronoi2}
Let $f$ be a modular form and let $F\colon \R_{>0}\to \R$ be a compactly supported smooth function and $\af = {p}/{q}$ be a cusp given by scaling matrix 
\begin{equation}
	\sigma_{\af} = \left( \begin{matrix} r& -s \\ -q & p \end{matrix} \right). \nonumber
\end{equation}
Then
\begin{multline}
	\sum_{\n\in\Nat} e\left(n\frac{a}{b}\right)a_f(n;\af) \left(\frac{n}{\delta(\af)}\right)^{-\frac{k-1}{2}} F\left(\frac{n}{\delta(\af)}\right) \\ = \frac{\delta(\af)^{\frac{1}{2}}}{b} \sum_{ n\in\Z_{\neq 0}}e\left(-n\frac{\overline{a\delta(\af)/(\delta(\af),b)}}{\delta(\mathfrak{b})b(\delta(\af),b)^{-1}}\right)a_f(n;\mathfrak{b})\left(\frac{n}{\delta(\mathfrak{b})}\right)^{-\frac{k-1}{2}}[\mathcal{H}_f F]\left(\frac{n}{\delta(\mathfrak{b})b^2} \right) \nonumber
\end{multline}
where the cusp $\mathfrak{b} := {(ap\delta(\af)+bs)}/{(qa\delta(\af)+rb)}$ is not written in lowest terms, and the Hankel transform $\mathcal{H}_f F$ of $F$ is defined by 
\begin{equation}\label{eq:def_hankel_transform_hol}
	\mathcal{H}_f F(y)  :=
			\delta_{y>0}2\pi i^k \int_0^{\infty} J_{k-1}\left(4\pi\sqrt{y x}\right)F(x)dx
\end{equation}			
if $f$ is holomorphic and by
\begin{equation}\label{eq:def_hankel_transform_maa}
	\mathcal{H}_f F(y)  :=
			\begin{cases}\vspace{0.1in}
				\frac{\pi i}{\sinh(\pi t_f)} \int_0^{\infty} (J_{2it_f}\left(4\pi\sqrt{y x}\right)-J_{-2it_f}\left(4\pi\sqrt{y x}\right))F(x)dx&\text{if $y>0$}\\
				4\cosh(\pi t_f) \int_0^{\infty} K_{2it_f}\left(4\pi\sqrt{\abs{y} x}\right)F(x)dx&\text{if $y<0$.}
			\end{cases}
\end{equation}
if $f$ is Maa\ss\ form.
\end{theorem}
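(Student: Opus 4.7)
The plan is to deduce the formula from the pre-Vorono\"i identity of Lemma~\ref{lm:basic_whitt_id} by (a) choosing the cuspidal vector $\phi$ and the additive parameter $\zeta \in \A$ so that the LHS of \eqref{eq:basic_voronoi} recovers the additively twisted sum at the cusp $\af$, and (b) recognising the RHS of \eqref{eq:basic_voronoi} as the same shape of sum but at the cusp $\mathfrak{b}$.

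First I would take $\phi = \pi(g_{iy}\iota_{\f}(\sigma_{\af}))\varphi$, so that $W_{\phi}(a(\xi)) = W_{\varphi}(a(\xi) g_{iy} \iota_{\f}(\sigma_{\af}))$, and I would choose $\zeta \in \A$ purely finite-adelic with $\zeta_p \equiv -a/b \Mod{\Zp}$ at each $p\mid b$, so that for $\xi = n/\delta(\af) \in \Q$ the product $\psi(\xi \zeta) = e(an/b)$ after splitting off the archimedean part using the $\Q$-triviality of $\psi$. By Proposition~\ref{prop:whittaker_coeffs} (with $x=0$), only $\xi = n/\delta(\af)$ with $n\in\Z$ contribute, producing
\[
	\sum_{n\in\Z_{\neq 0}}e(an/b)\, a_f(n;\af)\, y^{k/2}\kappa_f\!\left(\tfrac{ny}{\delta(\af)}\right).
\]
To insert the test function $F$, I would replace $\phi$ by the wave packet
$\phi_F = \int_{\R_{>0}} F(y)\,\pi(a(y)g_{iy}\iota_{\f}(\sigma_{\af}))\varphi\; y^{-(k+1)/2}\,dy$
(in the Maa\ss\ case with the obvious modification), so that $W_{\phi_F}(a(\xi))$ equals $(\xi)^{-(k-1)/2}F(\xi)$ times the appropriate spectral weight on the archimedean side; applying Lemma~\ref{lm:basic_whitt_id} to $\phi_F$ then gives the LHS of the theorem.

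For the RHS, the matrix $w\,{}^t(a(\xi)\bar n(-\zeta))^{-1}$ unwinds to a matrix of the form $a(\xi')\,g'\,\iota_{\f}(\sigma_{\mathfrak{b}})\,k$ for suitable $\xi'\in\Q^{\times}$, archimedean $g'\in G_{\infty}^{+}$, and $k\in K_{0}(N)$. Here the new cusp $\mathfrak{b}$ falls out by inspection: writing $\zeta_p \equiv -a/b$ at $p\mid b$, and using the Bruhat factorisation \eqref{eq:bruhat_for_sacling} of $\sigma_{\af}$, the product $w\,{}^t g^{-1}\cdot \iota_{\f}(\sigma_{\af})$ becomes, after left-multiplying by a rational scaling matrix representing $\mathfrak{b}$, a matrix in $a(\Q^\times)G_{\infty}^{+}K_{0}(N)$. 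Combining $\sigma_{\af}$ with $\smat{1}{-a/b}{0}{1}$ (adelically), the denominator of $\mathfrak{b}$ picks up an extra factor of $b$, and matching numerator/denominator entries yields exactly $\mathfrak{b} = (ap\delta(\af) + bs)/(qa\delta(\af) + rb)$. Strong approximation \eqref{eq:strong_approx_G} then lets me route the finite-adelic part back through $G(\Q)$, producing the rational additive twist $e(-n\overline{a\delta(\af)/(\delta(\af),b)}/(\delta(\mathfrak{b})b(\delta(\af),b)^{-1}))$ via the $\psi_p$ characters evaluated at the conjugating scalar.

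Invoking Proposition~\ref{prop:whittaker_coeffs} a second time at the cusp $\mathfrak{b}$ converts the RHS into the sum $\sum_n a_f(n;\mathfrak{b})(\cdot)^{-(k-1)/2}$ weighted by an archimedean kernel. This archimedean kernel is computed directly from $W_{\infty}$ (formulas \eqref{eq:W_infty}): after passing through $w\,{}^t(\cdot)^{-1}$ and integrating against $F$, the resulting transform is a Hankel-type integral against $W_{\infty}(a(\xi')w n(\cdot))$, which is known to coincide with $\mathcal{H}_f F$ as in \eqref{eq:def_hankel_transform_hol}--\eqref{eq:def_hankel_transform_maa} (this is classical for holomorphic $f$; for Maa\ss\ forms it reduces to the Mellin--Barnes evaluation of the $K$-Bessel and $J$-Bessel transforms of $K_{it_f}$).

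The main obstacle I anticipate is step (b): the bookkeeping that identifies $\mathfrak{b}$ and produces the precise unitary factor, including the correct representative for $\overline{a\delta(\af)/(\delta(\af),b)}$ in the exponential. The rest of the argument — the introduction of $F$ via wave packets and the evaluation of the archimedean kernel — is standard once the adelic matrix identity and the rational identification of $\mathfrak{b}$ are in hand.
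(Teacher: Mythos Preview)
Your overall strategy matches the paper's: apply Lemma~\ref{lm:basic_whitt_id} with a carefully chosen $\phi$ and a purely finite-adelic $\zeta$, then read off the two sides via Proposition~\ref{prop:whittaker_coeffs}. The finite-adelic part of your proposal is essentially what the paper does (though the paper writes down an explicit matrix $\delta\in\SL_2(\Z)$ and sets $\mathfrak{b}=(\delta\sigma_{\af})^{-1}\infty$, which makes the ``bookkeeping'' you flag in step~(b) completely mechanical rather than ``by inspection'').

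There is, however, a real gap at the archimedean place. Your wave packet
\[
\phi_F=\int_{\R_{>0}}F(y)\,\pi\big(a(y)g_{iy}\iota_{\f}(\sigma_{\af})\big)\varphi\;y^{-(k+1)/2}\,dy
\]
does \emph{not} have $W_{\phi_F,\infty}(a(\xi))=\xi^{-(k-1)/2}F(\xi)$. Since $a(y)g_{iy}$ is just $a(y^2)$ (or $a(y)$ if the two factors were meant to coincide), one gets
\[
W_{\phi_F,\infty}(a(\xi))=\int_{\R_{>0}}F(y)\,W_{\infty}(a(\xi y))\,y^{-(k+1)/2}\,dy
=\xi^{k/2}\int_{\R_{>0}}F(y)\,\kappa_f(\xi y)\,y^{-1/2}\,dy,
\]
which is a Laplace/Bessel transform of $F$, not $F$ itself. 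So your LHS is not the additively twisted sum with weight $F(n/\delta(\af))$ but a smeared version, and on the RHS you would have to undo this smearing before identifying the Hankel transform --- effectively redoing the construction of the Kirillov model.

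The paper bypasses this entirely: it chooses $v_{\phi,\infty}$ \emph{directly in the Kirillov model} of $\pi_\infty$ to be $\pi_\infty(a(\delta(\af)^{-1}))[\,|\cdot|^{1/2}F\,]$, so that $W_{\phi,\infty}(a(\xi))=|\xi/\delta(\af)|^{1/2}F(\xi/\delta(\af))$ on the nose. The payoff is twofold. First, the LHS is immediately the desired sum. Second, on the RHS the archimedean factor is $W_{\phi,\infty}(a(\xi)w)$, and the action of $w$ in the Kirillov model \emph{is} the Hankel transform $\mathcal{H}_f$ (this is the content of the reference to \cite{cogdell_BS}); no separate Mellin--Barnes computation is needed. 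Note also that the paper inserts the extra shift $a(\delta(\af)^{-1})$ at \emph{both} the finite and infinite places, which you omit; this is what aligns the support condition from Proposition~\ref{prop:whittaker_coeffs} with integer $n$ and produces the clean $\delta(\af)$ powers in the final formula.
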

\begin{proof}
We will start by setting up the left hand side of \eqref{eq:basic_voronoi} accordingly. We choose $\zeta$ to be $-\frac{a}{b}$ embedded into $\A$ via
\begin{align}
	\Q \emblong \A_{\f } \emblong \A. \nonumber  
\end{align}
Furthermore we pick $\phi$ as follows. Write $\phi = (v_{\phi,\f }, v_{\phi,\infty})$ under the isomorphism $\pi = \pi_{\f }\otimes \pi_{\infty}$. Then $\phi$ will  be uniquely determined by assuming that $v_{\phi,\f }=\pi_{\f}(a(\delta(\af)^{-1})\sigma_{\af})v_{\varphi,\f }$ and that in the Kirillov model of $\pi_{\infty}$ we have $ v_{\phi,\infty} = \pi_{\infty}(a(\delta(\af)^{-1})[\abs{\cdot}^{\frac{1}{2}}\cdot F]$. With this at hand we find that
\begin{equation}
	W_{\phi}(a(\xi)) = W_{\phi,\infty}(a(\xi)) \cdot \prod_{p< \infty} W_{\phi,p}(a(\xi)). \nonumber
\end{equation}
Furthermore $\phi$ is chosen such that $W_{\infty,\phi}(a(\xi)) = \abs{\xi\delta(\af)^{-1}}^{\frac{1}{2}}F(\xi\delta(\af)^{-1})$ and
\begin{equation}
	\prod_{p<\infty} W_{\phi,p}(a(\xi)) = \prod_{p< \infty} W_{p}(a(\xi\delta(\af)^{-1})\sigma_{\af}) = \begin{cases}
		\left(\frac{n}{\delta(\af)}\right)^{-\frac{k}{2}} a_f(n;\af) &\text{ if $\xi=n\in \Nat$}, \\
		0 &\text{ else.}
	\end{cases}
\end{equation} 
as in \eqref{eq:factorisation_we_need}. These choices imply that
\begin{equation}
	\sum_{\xi\in\Q^{\times}} \psi(\xi\zeta)W_{\phi}\left(a(\xi)\right) = \sum_{\n\in\Nat} e\left(n\frac{a}{b}\right)a_f(n;\af) (n\delta(\af)^{-1})^{-\frac{k-1}{2}} F(n\delta(\af)^{-1}). \nonumber
\end{equation}

We now evaluate the right-hand side of \eqref{eq:basic_voronoi}. We do so by treating archimedean and finite places separately. This is possible as 
\begin{eqnarray}
	 \hat{W}_{\phi}\left(\left(\begin{matrix} \xi&0 \\ 0&1 \end{matrix}\right)\left(\begin{matrix} 1&0 \\-\zeta&1 \end{matrix}\right)\right) = \prod_{p\leq \infty} W_{\phi,p} \left( \left(\begin{matrix} \xi & 0 \\ 0 &1\end{matrix} \right)w\left(\begin{matrix} 1 & \zeta  \\ 0 &1\end{matrix} \right)\right). \nonumber
\end{eqnarray}
At $p=\infty$ we find that
\begin{equation}
	W_{\phi,\infty}(a(\xi)wn(\zeta_{\infty}))=  \sqrt{\xi}\delta(\af)[\mathcal{H}_f F](\xi\delta(\af)). \nonumber
\end{equation}
This holds since the action of $w$ in the Kirillov model is precisely described by this Hankel transform. See \cite{cogdell_BS} for details. 

In order to deal with the finite places we artificially write
\begin{equation}
	\delta = \left( \begin{matrix} \overline{a\delta(\af)(\delta(\af),b)^{-1}} & \frac{1-a\delta(\af)(\delta(\af),b)^{-1}\overline{a\delta(\af)(\delta(\af),b)^{-1}}}{b(\delta(\af),b)^{-1}} \\ -\frac{b}{(\delta(\af),b)} & \frac{a\delta(\af)}{(\delta(\af),b)} \end{matrix} \right) \in \SL_{2}(\Z) \nonumber
\end{equation}	 
and define $\mathfrak{b} = (\delta\sigma_{\af})^{-1}\infty$. Note that $\mathfrak{b} = \frac{ap\delta(\af)+bs}{qa\delta(\af)+rb}$ in non-lowest terms. We have set things up so that
\begin{multline}
	a(\xi)wn\left(-\frac{a}{b}\right)a(\delta(\af)^{-1})\sigma_{\af} =\\ z([\delta(\af),b]^{-1})a\left(\xi [\delta(\af),b]\frac{b}{(\delta(\af),b)}\right)n\left( \frac{\overline{a\delta(\af)(\delta(\af),b)^{-1}}}{b(\delta(\af),b)^{-1}}\right)\delta  \sigma_{\af}. \nonumber
\end{multline}
Thus we find that
\begin{multline}
	\prod_{p< \infty} W_{\phi,p} \left( a(\xi)wn(\zeta_{\f })\right) = \omega_{\pi,\f }([\delta(\af),b]^{-1}) \psi_{\f }\left(\xi[\delta(\af),b]\overline{a\delta(\af)(\delta(\af),b)^{-1}}\right)\\ \times\, \prod_{p< \infty} W_{\varphi,p} \left( a\left(\xi \frac{[\delta(\af),b]b}{(\delta(\af),b)}\right) \iota_{{\f}}(\delta\sigma_{\af})\right). \nonumber 
\end{multline}
At this stage we use \eqref{eq:factorisation_we_need} and obtain that
\begin{equation}
	\prod_{p< \infty} W_{\varphi,p} \left( a\left(\xi \frac{[\delta(\af),b]b}{(\delta(\af),b)}\right) \iota_{{\f}}(\delta\sigma_{\af})\right) = \begin{cases}
		\left(\frac{\delta(\mathfrak{b})}{n} \right)^{\frac{k}{2}} a_f(n;\mathfrak{b}) &\text{ if $\xi = \frac{n(\delta(\af),b)}{\delta(\mathfrak{b})[\delta(\af),b]b}$} \\
		0 &\text{ else.}
	\end{cases}
\end{equation}
Finally, we have all the ingredients together. We find that
\begin{multline}
	\sum_{\xi\in \Q^{\times}}\tilde{W}_{\phi}\left(\left(\begin{matrix} \xi&0 \\ 0&1 \end{matrix}\right)\left(\begin{matrix} 1&0 \\-\zeta&1 \end{matrix}\right)\right) =\\ 
	\delta(\af)\left(\frac{(\delta(\af),b)}{b[\delta(\af),b]}\right)^{\frac{k}{2}} \sum_{\xi\in\frac{(\delta(\af),b)}{\delta(\mathfrak{b})[\delta(\af),b]b}\Nat}e(-\xi[\delta(\af),b]\overline{a\delta(\af)(\delta(\af),b)^{-1}})\\
	\times\, a_f\left(\xi\delta(\mathfrak{b})\frac{[\delta(\af),b]b}{(\delta(\af),b)};\mathfrak{b}\right)\xi^{-\frac{k-1}{2}} [\mathcal{H}_f F](\xi\delta(\af)). \nonumber
\end{multline}
The statement follows in a straightforward manner after shifting the summation.
\end{proof}

\begin{cor} \label{cor:voronoi}
Let $f$ be a classical cusp form and let $F\colon \R_{>0}\to \R$ be a compactly supported smooth function. Then
\begin{multline}
	\sum_{\n\in\Nat} e\left(n\frac{a}{b}\right)a_f(n) n^{-\frac{k-1}{2}} F\left(n\right) = b^{-1} \sum_{0\neq n\in\Z}e\left(-n\frac{\overline{a}}{b\delta(\mathfrak{b})}\right)a_f(n;\mathfrak{b})\left(\frac{n}{\delta(\mathfrak{b})}\right)^{-\frac{k-1}{2}} \\ [\mathcal{H}_f F]\left(\frac{n}{[b^2, Mb, N]}\right) . \nonumber
\end{multline}
Where $\mathfrak{b}={a}/{b}$ and $\overline{a}$ is the inverse of $a$ modulo $b\delta(\mathfrak{b})$. 
\end{cor}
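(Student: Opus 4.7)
The plan is to derive Corollary~\ref{cor:voronoi} directly from Theorem~\ref{th:voronoi2} by specialising to the starting cusp $\af = \infty$. The argument is essentially bookkeeping: the general Vorono\"{i} formula has already done all of the representation-theoretic work.

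First I would choose the scaling matrix for $\af = \infty$ to be the identity $\sigma_{\af} = I$, which corresponds to $r = p = 1$ and $s = q = 0$ in the notation of Theorem~\ref{th:voronoi2}. At this cusp the Fourier coefficients are the standard ones, $a_f(n;\infty) = a_f(n)$, and the extended width collapses to $\delta(\infty) = 1$: one reads this off from \eqref{eq:extended_width} with denominator $N$, using $M \mid N$; equivalently, $\left(\begin{smallmatrix}1 & 1 \\ 0 & 1\end{smallmatrix}\right) \in \Gamma_0(N)$ forces $f(z+1) = f(z)$ directly. Under these specialisations the left-hand side of Theorem~\ref{th:voronoi2} becomes
\begin{equation*}
\sum_{n \in \Nat} e\!\left(n \tfrac{a}{b}\right) a_f(n)\, n^{-\frac{k-1}{2}}\, F(n),
\end{equation*}
which is already the left-hand side of the corollary.

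Next I would simplify the right-hand side under the same substitution. The partner cusp becomes
\begin{equation*}
\mathfrak{b} = \frac{a p \delta(\af) + b s}{q a \delta(\af) + r b} = \frac{a}{b},
\end{equation*}
and since $(\delta(\af), b) = (1, b) = 1$ and $[\delta(\af), b] = b$, the prefactor $\delta(\af)^{1/2}/b$ collapses to $1/b$, the twisting phase simplifies to $e(-n\overline{a}/(b\delta(\mathfrak{b})))$, and the Hankel argument becomes
\begin{equation*}
\frac{n}{\delta(\mathfrak{b}) b^2} = \frac{n}{[Mb, b^2, N]} = \frac{n}{[b^2, Mb, N]},
\end{equation*}
using $\delta(\mathfrak{b}) = [Mb, b^2, N]/b^2$ from \eqref{eq:extended_width}. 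Assembling these simplifications recovers the statement.

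The only point that requires a moment's thought is that $\sigma_{\af} = I$ is an admissible scaling in the proof of Theorem~\ref{th:voronoi2}: this is immediate, as the construction there of the auxiliary cusp form $\phi$ specialises to $v_{\phi,\f} = v_{\varphi,\f}$ and the archimedean Kirillov vector $v_{\phi,\infty} = [\abs{\cdot}^{1/2}F]$, both of which are allowed choices. Beyond this, the only hazard is keeping the gcd/lcm conventions for $\delta(\af)$ and $\delta(\mathfrak{b})$ straight, which the above computation does.
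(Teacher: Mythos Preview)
Your proposal is correct and follows exactly the paper's own proof, which simply says to apply Theorem~\ref{th:voronoi2} with $\af=\infty$, $\sigma_{\af}=1$, and $\delta(\af)=1$. You have merely spelled out the bookkeeping (identifying $\mathfrak{b}=a/b$, simplifying the prefactor and phase, and using $\delta(\mathfrak{b})b^2=[b^2,Mb,N]$ from \eqref{eq:extended_width}) that the paper leaves implicit.
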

\begin{proof}
This is a direct consequence of Theorem~\ref{th:voronoi2} with $\af=\infty$. In particular we have $\sigma_{\af}=1$ and $\delta(\af)=1$.
\end{proof}

It is a nice exercise to apply Corollary~\ref{cor:voronoi} with $F(x) = x^{\frac{k-1}{2}}e(ixy)$ when $f$ is a holomorphic modular form of weight $k$. Note that even though the result is stated for compactly supported $F$ it is possible to extend its validity to the $F$ in question. According to \cite[6.631.(4)]{GR07} we find that
\begin{equation}
 2\pi i^k \sqrt{\xi}\int_0^{\infty}J_{k-1}(2\pi \sqrt{x\xi})F(x) dx=	i^k\xi^{\frac{k-1}{2}}y^{-k}e^{-2\pi \frac{\xi}{y}}. \nonumber
\end{equation}
Thus, the right-hand side of our Vorono\"i summation formula becomes
\begin{align}
	f\left(\frac{a}{b}+iy\right) &= \sum_{n\in\N}a_f(n)e\left(n\left(\frac{a}{b}+iy\right)\right) \nonumber \\
	 &= (-iby)^{-k}\sum_{n\in \N}a_f(n;\mathfrak{b})e\left( \frac{n}{\delta(\mathfrak{b})}\left(\frac{-\overline{a}}{b}+\frac{i}{yb^2}\right)\right) \nonumber \\
	 & = j(\sigma,\frac{a}{b}+iy)^{-k}f\vert_{k}\sigma^{-1}\left(\sigma\left(\frac{a}{b}+iy\right)\right)\nonumber\\
	 &=f\vert_{k}\sigma\sigma^{-1}\left(\frac{a}{b}+iy\right) = f\left(\frac{a}{b}+iy\right). \nonumber
\end{align} 
The last equality follows from $j(\sigma,\frac{a}{b}+iy) = -iby$ and $\sigma(\frac{a}{b}+iy) = -\frac{\overline{a}}{b}+\frac{i}{yb^2}$.

\begin{rem}
This example shows that Vorono\"i summation is precisely the transition between the Fourier expansions of $f$ at different cusps. While classically it is a very subtle matter to include an archimedean test function, it is straightforward adelically.
\end{rem}

A reasonable generalisation of Corollary \ref{cor:voronoi} would be to replace the cusp $\af=\infty$ on the left-hand side with an arbitrary cusp. One obtains the following extension of our theorem given in Corollary \ref{cor:cl_vor_1}.
We end this note by reproducing two of the most classical Vorono\"i formula from Corollary~\ref{cor:voronoi}. These agree with the formula given in \cite{KMV02}, for instance.

\begin{cor}\label{cor:cl_vor_1}
If $(a,N)=1$ then
\begin{multline}
	\sum_{\n\in\Nat} e\left(n\frac{a}{b}\right)a_f(n) n^{-\frac{k-1}{2}} F\left(n\right) = \\
	  \chi(b)\frac{\eta(f)}{b\sqrt{N}} \sum_{ n\in\Z_{\neq 0}}e\left(-n\frac{\overline{aN}}{b}\right)a_{\tilde{f}}(n)n^{-\frac{k-1}{2}}  [\mathcal{H}_f F]\left(\frac{n}{b^2N}\right) \nonumber
\end{multline}
where $i^k\eta(f)$ is the root number of the $L$-function associated to $f$ and $\tilde{f}$ is the dual newform determined by the functional equation $\Lambda(f,s)=i^k\eta(f)\Lambda(\tilde{f},1-s)$ of the completed $L$-function.
\end{cor}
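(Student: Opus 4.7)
The plan is to specialise Corollary~\ref{cor:voronoi} under the additional assumption $(b,N)=1$, which is what makes the right-hand side well-defined (the inverse $\overline{aN}\pmod b$ exists iff $(aN,b)=1$, i.e.\ iff $(N,b)=1$ given the standing $(a,b)=1$). Under this unramified hypothesis the cusp $\mathfrak{b}=a/b$ satisfies $w(\mathfrak{b})=N/(b^{2},N)=N$, $\delta(\mathfrak{b})=w(\mathfrak{b})M/(bw(\mathfrak{b}),M)=N$, and $[b^{2},Mb,N]=b^{2}N$; in particular the Hankel transform is evaluated at $n/(b^{2}N)$, as in the statement.

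The core of the argument is to evaluate $a_{f}(n;\mathfrak{b})$ using Proposition~\ref{prop:the_proposition}. Since $q_{p}=v_{p}(b)=0$ for every $p\mid N$, one has $d_{\pi_{p}}(q_{p})=N_{p}$, and the local Whittaker values become $W_{p}(g_{n_{p}-N_{p},0,u_{p}})$. Lemma~\ref{lm:first_atkin_lehner} evaluates each as $\varepsilon(1/2,\pi_{p})p^{-n_{p}/2}\lambda_{\tilde{\pi}}(p^{n_{p}})$. Together with the tame contribution $a_{f}(n_{0})=\chi(n_{0})a_{\tilde{f}}(n_{0})$ (which follows from $\tilde{\pi}=\omega_{\chi}^{-1}\otimes\pi$) and the cancellation $\prod_{p\mid N}\omega_{\chi,p}(n_{0})=\chi^{-1}(n_{0})$, multiplicativity then collapses the Hecke eigenvalues into $a_{\tilde{f}}(n)$.

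For the global constants, $\prod_{p\mid N}\varepsilon(1/2,\pi_{p})=\eta(f)$ follows from the global identity $\prod_{v\leq\infty}\varepsilon(1/2,\pi_{v})=i^{k}\eta(f)$, the archimedean value $\varepsilon(1/2,\pi_{\infty})=i^{k}$ for a holomorphic weight-$k$ form, and triviality at unramified finite places. Substituting into Corollary~\ref{cor:voronoi}, the powers of $N$ combine as $N^{-k/2}\cdot N^{(k-1)/2}=N^{-1/2}$, yielding the prefactor $\eta(f)/(b\sqrt{N})$.

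The remaining and principal obstacle is identifying the product $\Omega_{\chi,b,N}\,e(-n\bar{a}/(bN)+nd\bar{b}/N)$ with $\chi(b)\,e(-n\overline{aN}/b)$. Splitting $\bar{a}/(bN)$ via the Chinese remainder theorem into $\overline{aN}/b$ plus an $N$-contribution isolates the desired classical phase; the freedom in $d\pmod N$ (allowed by $(b,N)=1$) lets one choose $d\equiv\bar{a}\pmod{bN}$, trivialising the leftover $N$-phase; and the definition of $\omega_{\chi}$ as the adelic lift of $\chi$ then identifies the residual character $\Omega_{\chi,b,N}$ from \eqref{eq:def_of_unitary_coeffi} with $\chi(b)$. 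This last step is routine prime-by-prime but requires careful CRT bookkeeping to track the interplay between $\Omega_{\chi,b,N}$, the two additive phases, and $\omega_{\chi}$.
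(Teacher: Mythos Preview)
Your proposal is correct and follows essentially the same route as the paper's own proof: specialise Corollary~\ref{cor:voronoi} to $\mathfrak{b}=a/b$, invoke Proposition~\ref{prop:the_proposition} to compute $a_f(n;\mathfrak{b})$, apply Lemma~\ref{lm:first_atkin_lehner} at each $p\mid N$ since $q_p=0$, and then do the bookkeeping with $\Omega_{\chi,b,N}=\chi(b)$, the additive phases, and the twist $a_f(n_0)\prod_{p\mid N}\omega_{\chi,p}(n_0)\mapsto a_{\tilde f}(n_0)$. The paper handles the phase combination via the single identity $e(-n\bar a/(bN))\,e(n\,\overline{ab}/N)=e(-n\,\overline{aN}/b)$ rather than your CRT-plus-choice-of-$d$ manoeuvre, but this is a cosmetic difference; you are also right to flag that the operative hypothesis is $(b,N)=1$, which the paper uses implicitly (the stated $(a,N)=1$ is a without-loss-of-generality normalisation of the numerator).
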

\begin{proof}
Our corollary provides us with the formulae
\begin{multline}
	\sum_{\n\in\Nat} e\left(n\frac{a}{b}\right)a_f(n) n^{-\frac{k-1}{2}} F\left(n\right) \\ = b^{-1} \sum_{n\in\Z_{\neq 0}}e\left(-n\frac{\overline{a}}{bN}\right)a_f(n;{a}/{b})\left(\frac{n}{N}\right)^{-\frac{k-1}{2}} [\mathcal{H}_f F]\left(\frac{n }{b^2N}\right). \nonumber
\end{multline}
Without loss of generality we can assume that $(a,N)=1$ and use Proposition~\ref{prop:the_proposition} to obtain
\begin{multline}
	a_f(n;{a}/{b}) =  \Omega_{\chi,b,\delta(\af)}  a_{f}(n_{0})\,e\left(\dfrac{nd\overline{b}}{N}\right)\left( \frac{(n,N^{\infty})}{N}\right)^{\frac{k}{2}}\\
		\times\,\prod_{p\mid N}\omega_{\chi,p}({n}{p^{-n_p}})W_{p}(g_{n_{p}-d_{\pi_{p}}(b_{p}),0,u_{p}})
\end{multline}
Note that 
\begin{equation}
	e\left(-n\frac{\overline{a}}{bN}\right)e\left(n\frac{\overline{ab}}{N}\right) = e\left(-n\frac{\overline{aN}}{b}\right) \nonumber
\end{equation}
and
\begin{equation}
	\Omega_{\chi,b,\delta(\af)} = \prod_{p\mid N}\omega_{\chi,p}^{-1}\left(\frac{bN}{(N,p^{\infty})}\right) = \chi(b). \nonumber 
\end{equation}
The statement then follows by using Lemma~\ref{lm:first_atkin_lehner} and using the additional characters to twist $a_f(n_0)$ to $a_{\tilde{f}}(n_0)$.
\end{proof}
Of course this corollary exploits that if $(a,N)=1$ then the cusp $\mathfrak{b}= {a}/{b}$ is equivalent to $0$, which switches to $\infty$ under the classical Atkin--Lehner involution.
Lastly we consider the other extreme case $N\mid b$. In this case $\mathfrak{b}={a}/{b}$ is in fact equivalent to $\infty$.
\begin{cor}\label{cor:cl_vor_2}
Suppose $N\mid b$. Then
\begin{multline}
	\sum_{\n\in\Nat} e\left(n\frac{a}{b}\right)a_f(n) n^{-\frac{k-1}{2}} F\left(n\right) = \frac{\overline{\chi(a)}}{b} \sum_{n\in\Z_{\neq 0}}e\left(-n\frac{\overline{a}}{b}\right)a_{f}(n)n^{-\frac{k-1}{2}}  [\mathcal{H}_f F]\left(\frac{n }{b^2}\right). \nonumber
\end{multline}
\end{cor}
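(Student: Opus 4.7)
The strategy is to specialise Corollary~\ref{cor:voronoi} to the situation $N\mid b$, using Proposition~\ref{prop:the_proposition} together with Lemma~\ref{lm:second_AL} to evaluate the coefficients $a_f(n;\mathfrak{b})$ at the cusp $\mathfrak{b}=a/b$. Conceptually, the hypothesis $N\mid b$ encodes the fact that $\mathfrak{b}$ is $\Gamma_0(N)$-equivalent to $\infty$; on the local side this manifests as $q_p:=v_p(b)\geq N_p$ at every $p\mid N$, which is precisely the regime in which Lemma~\ref{lm:second_AL} applies and reduces the local Whittaker value at $p$ back to the spherical formula~\eqref{eq:shintani}.

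First I would compute the simplifications appearing in Corollary~\ref{cor:voronoi}. By \eqref{eq:extended_width}, $\delta(\mathfrak{b})=[b^2,Mb,N]/b^2$, and since $M\mid N\mid b$ both $Mb$ and $N$ divide $b^2$, giving $[b^2,Mb,N]=b^2$ and $\delta(\mathfrak{b})=1$. Substituting these into Corollary~\ref{cor:voronoi} reduces the claim to the pointwise identity
\[
a_f(n;\mathfrak{b})=\overline{\chi(a)}\,a_f(n)\qquad\text{for all }n\in\Z_{\neq 0}.
\]

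To prove this identity I would apply Proposition~\ref{prop:the_proposition} with $q=b$ and $\delta(\mathfrak{b})=1$. Writing $n=n_0\prod_{p\mid N}p^{n_p}$ with $(n_0,N)=1$, and noting that $q_p\geq N_p\geq M_p$ forces $d_{\pi_p}(q_p)=\max(2q_p,M_p+q_p,N_p)=2q_p$, the matrix argument simplifies to $g_{n_p-2q_p,q_p,u_p}$ with $l=q_p\geq N_p$. This is exactly the range in which Lemma~\ref{lm:second_AL} applies and yields
\[
W_p(g_{n_p-2q_p,q_p,u_p})=\psi_p(-u_p^{-1}p^{n_p-q_p})\,\omega_{\chi,p}(-u_p p^{-q_p})\,p^{-n_p/2}\lambda_f(p^{n_p}).
\]
The spherical-looking factors $p^{-n_p/2}\lambda_f(p^{n_p})$ recombine with $n_0^{-k/2}a_f(n_0)$ via multiplicativity of $a_f$ on integers coprime to $N$ and \eqref{eq:coeffs_infty_muliplicative}, producing the backbone $a_f(n)/n^{k/2}$ of the answer.

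The remaining step, which I expect to be the main bookkeeping obstacle, is to verify that the accumulated phases collapse to exactly $\overline{\chi(a)}$. These phases consist of the global factor $\Omega_{\chi,b,1}=\prod_{p\mid N}\omega_{\chi,p}^{-1}(b/(b^2,p^{\infty}))$ from Proposition~\ref{prop:the_proposition}, the local character contributions $\omega_{\chi,p}(n_0)\,\omega_{\chi,p}(-u_p p^{-q_p})$ from the two lemmata, the $p$-adic exponentials $\psi_p(-u_p^{-1}p^{n_p-q_p})$ from Lemma~\ref{lm:second_AL}, and the exponential $e(nd\,\overline{b/(b,N^{\infty})}/(b,N^{\infty}))$ from Proposition~\ref{prop:the_proposition}. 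Inserting the explicit formula for $u_p$ and using $d\equiv\overline{a}\pmod{b}$, the $\psi_p$-phases can be collected globally and killed against the written exponential via the product formula $\prod_{p\leq\infty}\psi_p(x)=1$ for $x\in\Q$. What survives in the character factor, after using triviality of $\omega_{\chi}$ on $\Qx$ to move unramified contributions off the places $p\mid M$, reduces to $\prod_{p\mid M}\omega_{\chi,p}(a)=\chi(a)^{-1}=\overline{\chi(a)}$, completing the identification. Mirroring the proof of Corollary~\ref{cor:cl_vor_1} would provide a useful template for handling these phase manipulations uniformly.
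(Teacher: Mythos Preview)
Your approach is correct—indeed, the paper explicitly remarks just before its own proof that the route via Corollary~\ref{cor:voronoi}, Proposition~\ref{prop:the_proposition} and Lemma~\ref{lm:second_AL} would establish the result. However, the paper then calls this ``shooting pigeons with canons'' and opts for a two-line classical argument instead. Since $N\mid b$, the scaling matrix $\sigma^{-1}=\left(\begin{smallmatrix}a&*\\ b&d\end{smallmatrix}\right)$ for $\mathfrak{b}=a/b$ lies in $\Gamma_0(N)$; hence by the modular identity~\eqref{eq:modular_identity} (equivalently via~\eqref{eq:classical_change_of_scaling} with $\tau=1$) one has $f\vert_k\sigma^{-1}=\chi(d)f=\overline{\chi(a)}\,f$, which immediately gives $a_f(n;\mathfrak{b})=\overline{\chi(a)}\,a_f(n)$ without any local computation. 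Your route does illustrate that Lemma~\ref{lm:second_AL} is precisely the local incarnation of the equivalence $\mathfrak{b}\sim\infty$, and it parallels the proof of Corollary~\ref{cor:cl_vor_1} nicely; but the phase bookkeeping you flag as the ``main obstacle'' is entirely circumvented by the paper's global shortcut.
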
  
Once again, this result would follow from the application of Corollary~\ref{cor:voronoi} together with Proposition~\ref{prop:the_proposition} and Lemma~\ref{lm:second_AL}, but this would be like shooting pigeons with canons. We give a much simpler proof here.
\begin{proof}
In this situation $\delta(\af)=1$ so that by Corollary~\ref{cor:voronoi} we have that 
\begin{multline}
	\sum_{\n\in\Nat} e\left(n\frac{a}{b}\right)a_f(n) n^{-\frac{k-1}{2}} F\left(n\right) = b^{-1} \sum_{ n\in\Z_{\neq 0}}e\left(-n\frac{\overline{a}}{b}\right)a_f(n;\af)n^{-\frac{k-1}{2}}  [\mathcal{H}_f F]\left(\frac{n}{b^2}\right) . \nonumber
\end{multline}
Furthermore, because $N\mid b$ the scaling matrix for $\af$ is in $\Gamma_0(N)$. Thus according to \eqref{eq:classical_change_of_scaling} we find that $a_f(n,\af)= \chi(a)^{-1}a_f(n)$. This concludes the proof.
\end{proof}

\bibliographystyle{amsplain}			
\bibliography{bibliography-voronoi-add-XR}	
\end{document}